\DeclareMathOperator{\End}{\mathrm{End}}
\newcommand{\R}{\mathbb{R}}
\newcommand{\Z}{\mathbb{Z}}
\newcommand{\aaac}{\mathfrak{r}_{2,c}}
\newcommand{\aaan}{\mathfrak{r}_{2,n}}
\newcommand{\aaa}{\mathfrak{r}_2}
\DeclareMathOperator{\ad}{\mathrm{ad}}
\DeclareMathOperator{\tr}{\mathrm{tr}}
\newcommand{\cov}{\nabla}
\newcommand{\ff}{|\theta|^2}
\newcommand{\fq}{|\theta|^4}
\newcommand{\fm}{|\theta|^{-4}}
\DeclareMathOperator{\grad}{\mathrm{grad}}
\newcommand{\hlie}{\mathfrak{h}}
\newcommand{\id}{\mathrm{Id}}
\newcommand{\liealg}{\mathfrak{g}}
\newcommand{\lie}{\mathcal{L}}
\DeclareMathOperator{\rank}{\mathrm{rank}}
\DeclareMathOperator{\trace}{\mathrm{tr}}
\newtheorem{theorem}{Theorem}[section]
\newtheorem{proposition}[theorem]{Proposition}
\newtheorem{proposition-definition}[theorem]{Proposition-Definition}
\newtheorem{corollary}[theorem]{Corollary}
\theoremstyle{remark}
\newtheorem{remark}[theorem]{Remark}
\theoremstyle{definition}
\newtheorem{definition}[theorem]{Definition}
\title{Integrable LCK manifolds}
\author[B. Cappelletti-Montano]{Beniamino Cappelletti-Montano}
 \address{Dipartimento di Matematica e Informatica, Universit\`a degli Studi di Cagliari, Via Ospedale 72, 09124 Cagliari, Italy}
 \email{b.cappellettimontano@unica.it}
\author[A. De Nicola]{Antonio De Nicola}
 \address{Dipartimento di Matematica, Universit\`a degli Studi di Salerno, Via Giovanni Paolo II 132, 84084 Fisciano, Italy}
 \email{antondenicola@gmail.com}
\author[I. Yudin]{Ivan Yudin}
 \address{University of Coimbra, CMUC, Department of Mathematics, 3001-501 Coimbra, Portugal}
 \email{yudin@mat.uc.pt}
\subjclass[2010]{53C18, 53C55, 17B30}
\thanks{This work was partially supported by
Fondazione di Sardegna and Regione Autonoma della  Sardegna, Project STAGE and KASBA, by the Centre for Mathematics of the
University of Coimbra - UIDB/00324/2020, funded by the Portuguese Government through FCT/MCTES.
BCM and ADN are members of GNSAGA - Istituto Nazionale di Alta
Matematica}
\begin{document}

\begin{abstract}
We study a natural class of LCK manifolds that we call integrable LCK manifolds: those where the anti-Lee form $\eta$ corresponds to an integrable distribution.
 As an application  we obtain a characterization of  unimodular integrable LCK Lie algebras as  Kähler  Lie algebras equipped with suitable derivations.
\end{abstract}

\maketitle

\section{Introduction}
A locally conformal Kähler (LCK) manifold is a Hermitian manifold $(M, J, g)$ of dimension $2n+2$ such that the fundamental 2-form $\Omega$
and the Lee  $1$-form $\theta$ satisfy the identities
\[
d \Omega = \theta \wedge \Omega,\quad d\theta=0.
\]
LCK  manifolds are a natural generalization of Kähler manifolds. They have been studied by many authors since the foundational work of Vaisman in the ’70s (see for instance \cite{DrOr} and references therein).
The most studied subclass of LCK manifolds is the one of Vaisman manifolds, which are the locally conformal Kähler manifolds such that the Lee $1$-form is parallel. For every Vaisman manifold the anti-Lee $1$-form $\eta := -\theta \circ J$  gives rise to a contact structure on the kernel of the Lee $1$-form $\theta$, that is, to a maximally nonintegrable distribution. In this paper we are interested in the opposite case:  the one of a LCK structure where the anti-Lee form $\eta$ corresponds to an integrable distribution. In this case we will say that $(M, J, g)$ is an \emph{integrable LCK manifold}.

Our work takes inspiration from examples of Tricerri in \cite{tri82} of LCK structures on some Inoue surfaces. In \cite{belg00}  Belgun carried out a systematical analysis of locally conformal Kähler metrics on compact complex surfaces. His paper was groundbreaking since at the time it was conjectured that all non-Kähler compact complex surfaces admit LCK metrics.  Belgun showed that no Inoue surface admits a Vaisman metric and some of them even do not admit an LCK metric. Some of those Inoue surface types that admit a non-Vaisman LCK metric turn out to be integrable LCK manifolds according to our definition.
 The examples of Tricerri were further studied in  \cite{cordero, sawai, andrada}. 
 
Our main results are the following. In Theorem~\ref{deta} we show that an LCK manifold  $M^{2n+2}$ is integrable if and only if
\(
d \eta = f \eta \wedge \alpha,
\)
where the function $f$ is given by $\frac{\delta \theta}{\ff}  + n$. In particular, if the metric is Gauduchon ($\delta \theta=0$) and the Lee vector field has length one, we obtain that $f=n$. Moreover, in this case the commutator of the Lee and anti-Lee vector fields is given by
\begin{equation*}
\left[ U,V \right] =  n \,V.
\end{equation*}
We investigate the possibilities that Lee or anti-Lee vector field are Killing. We prove that if the Lee vector field $U$ is Killing then the manifold is not complete. If the anti-Lee vector field is Killing and has constant length, then the manifold cannot be compact.

Section~4 is devoted to the study of integrable LCK Lie algebras. We show that if $\liealg$ is an integrable LCK Lie algebra then it is a semidirect product
\[
\liealg \cong \left\langle U,V \right\rangle
\ltimes_\rho \hlie,
\]
 where $\hlie=\left\langle U,V \right\rangle^\perp$ is a Kähler ideal and $\rho$ is the adjoint representation.
We can say more if the Lie algebra $\liealg$ is unimodular. In this case we prove that $\liealg$ is solvable and $\hlie$ is abelian.  

 In Section~5 we identify integrable LCK Lie algebras among all semidirect products as above. We reduce the classification of all unimodular integrable LCK Lie algebras to the classification of pairs of even dimensional matrices satisfying suitable relations involving the complex structure (see \eqref{conditions}). We provide an example showing that not all  integrable LCK Lie algebras are unimodular.

  In Section~6 we consider the four-dimensional case. We classify 4-dimensional unimodular integrable LCK Lie algebras by showing that they consist of a 1-parameter family $\liealg_b$ of almost abelian Lie algebras 
and one isolated case $\mathfrak{d}_4$. We identify these LCK Lie algebras in the list of all 4-dimensional LCK solvable Lie algebras of~\cite{angella}. The 1-parameter family $\liealg_b$ was thoroughly studied in ~\cite{andrada} . In particular it was identified for which parameters $b$ the 1-connected
Lie group associated to $\liealg_b$ admits a cocompact discrete subgroup.
The integrable LCK Lie algebra  $\mathfrak{d}_4$ also corresponds to a compact integrable LCK manifold that had already been studied as a 4-dimensional LCK solvmanifold in \cite{cordero,sawai}.

In the final section, for every even dimension greater than $2$, we provide examples of integrable LCK manifolds which are not globally conformal Kähler.

\section{Preliminaries}
Recall that an \emph{almost Hermitian structure} on a manifold $M$ of even dimension $2(n+1)$ is a couple $(J, g)$, where $J$ is a $(1, 1)$ tensor field on $M$, $g$ is a Riemannian metric and
\begin{equation*}
J^2 = -\id, \; \; g(JX, JY) = g(X, Y),
\end{equation*}
for $X, Y \in {\mathfrak X}(M)$.
The fundamental $2$-form of $M$ is defined by
\[
\Omega (X, Y) = g(X, JY), \; \; \mbox{ for } X, Y \in {\mathfrak X}(M).
\]
A manifold $M$ endowed with an almost Hermitian structure is said to be an
\emph{almost Hermitian manifold}.
The almost Hermitian manifold $(M, J, g)$ is said to be:
\begin{itemize}
\renewcommand\labelitemi{--}
\item \emph{Hermitian} if the Nijenhuis torsion $N_J$ of $J$ vanishes; \item
\emph{Kähler} if $(M, J, g)$ is Hermitian and $d\Omega = 0$.
\end{itemize}
In the Kähler case the 2-form $\Omega$ is harmonic.
For a general Hermitian manifold
 one gives the following definition.
\begin{definition}
\label{leeoneform}
Given a Hermitian manifold $(M,J,g)$,
the $1$-form $\eta = \frac1{n}\delta \Omega $ is called the \emph{anti-Lee
$1$-form} and $\theta:= i_J\eta$ is called the \emph{Lee $1$-form}.
\end{definition}
Notice that $\eta = -i_J \theta$.
The anti-Lee $1$-form also has the following property that will be used
frequently in the paper.
\begin{proposition}
\label{ijdeta}
Let $(M,J,g)$ be a Hermitian manifold. Then $i_J d\eta =0$.
\end{proposition}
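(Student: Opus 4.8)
The plan is to translate the statement into the language of bidegrees and reduce it to a single vanishing. First I would record the purely algebraic fact that, on the complexified tangent spaces, the derivation $i_J$ acts on forms of type $(p,q)$ as multiplication by $\sqrt{-1}\,(p-q)$; in particular it kills exactly the forms of type $(1,1)$. For a real $2$-form $\omega$ this gives $i_J\omega = 2\sqrt{-1}\,(\omega^{2,0}-\omega^{0,2})$, and since $\omega^{0,2}=\overline{\omega^{2,0}}$ for a real form, the condition $i_J\omega=0$ is equivalent to $\omega$ being of type $(1,1)$, i.e.\ to the $J$-invariance $\omega(JX,JY)=\omega(X,Y)$. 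Thus Proposition~\ref{ijdeta} is equivalent to the assertion that $d\eta$ has no $(2,0)$ component (equivalently no $(0,2)$ component).

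Next I would bring in integrability. Since $N_J=0$, the manifold is complex and $d=\partial+\bar\partial$. Writing the real $1$-form as $\eta=\eta^{1,0}+\overline{\eta^{1,0}}$, the $(2,0)$-part of $d\eta$ is exactly $\partial\eta^{1,0}$ and the $(0,2)$-part is its conjugate, so the whole proposition comes down to proving $\partial\eta^{1,0}=0$. To identify $\eta^{1,0}$ I would use the decomposition $\delta=\partial^{*}+\bar\partial^{*}$ together with the fact that $\Omega$ is of type $(1,1)$: then $\delta\Omega=\partial^{*}\Omega+\bar\partial^{*}\Omega$ with $\bar\partial^{*}\Omega$ of type $(1,0)$ and $\partial^{*}\Omega$ of type $(0,1)$, whence $\eta^{1,0}=\tfrac1n\bar\partial^{*}\Omega$. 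It is worth recording, via $\theta=i_J\eta$ and hence $\theta^{1,0}=\sqrt{-1}\,\eta^{1,0}$, that $\partial\eta^{1,0}=-\sqrt{-1}\,(d\theta)^{2,0}$; so the claim is also equivalent to $i_J\,d\theta=0$, and whichever of $\eta,\theta$ is more convenient can be used.

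The heart of the matter, and the step I expect to be the main obstacle, is therefore the identity
\[
\partial\,\bar\partial^{*}\Omega=0
\]
on an arbitrary Hermitian manifold. I would attack this Hodge-theoretically: using $\bar\partial^{*}=-*\partial*$ and $*\Omega=\Omega^{m-1}/(m-1)!$ with $m=n+1$ the complex dimension, one finds $\bar\partial^{*}\Omega=-\tfrac{1}{(m-2)!}*\!\bigl(\Omega^{m-2}\wedge\partial\Omega\bigr)$, and it remains to check that applying $\partial*$ to this returns zero. This is precisely where integrability must enter beyond the formal bidegree count, presumably through the Hermitian (torsion-corrected) commutation relations between $\Lambda$, $\partial$ and $\bar\partial^{*}$. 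The case $m=2$ (where $*\Omega=\Omega$, so one only needs $\partial*\partial\Omega=0$) is a good sanity check and already exhibits the required cancellation on the Hopf surface, where $\eta^{1,0}$ is in fact $\partial$-exact.

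Should the complex machinery prove awkward, I would fall back on a direct Riemannian computation: write $\eta=\tfrac1n\delta\Omega$ through the Levi-Civita connection as $\eta(X)=-\tfrac1n\sum_i(\nabla_{e_i}\Omega)(e_i,X)$, expand $d\eta(X,Y)-d\eta(JX,JY)$, and simplify using the Gray--Hervella identity that expresses $g\bigl((\nabla_XJ)Y,Z\bigr)$ in terms of $d\Omega$ when $N_J=0$. I expect this route to be correct but computation-heavy, and to hinge on exactly the same use of integrability as the Hodge-theoretic argument.
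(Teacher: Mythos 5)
Your reduction of the statement to ``$d\eta$ is of type $(1,1)$'', equivalently $i_J d\theta=0$, equivalently $\partial\eta^{1,0}=0$, is correct. But the argument stops exactly at the step you yourself flag as the heart of the matter: the identity $\partial\bar\partial^{*}\Omega=0$ is never established, only deferred to ``torsion-corrected commutation relations''. This is a genuine gap, and it cannot be filled, because on an arbitrary Hermitian manifold that identity is \emph{false}. Take $M=\mathbb{C}^2$ with $\Omega=\sqrt{-1}\,\bigl(h_1\,dz_1\wedge d\bar z_1+h_2\,dz_2\wedge d\bar z_2\bigr)$ for positive functions $h_1,h_2$. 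Solving $d\Omega=\theta\wedge\Omega$ (which determines the Lee form of a Hermitian surface) gives $\theta^{1,0}=\partial_{z_1}(\log h_2)\,dz_1+\partial_{z_2}(\log h_1)\,dz_2$, whence $(d\theta)^{2,0}=\partial_{z_1}\partial_{z_2}\log(h_1/h_2)\,dz_1\wedge dz_2$. For $h_2=1$ and $h_1=e^{\,z_1z_2+\bar z_1\bar z_2}$ this equals $dz_1\wedge dz_2\neq 0$, so $i_J d\theta\neq 0$ and hence $i_J d\eta\neq 0$. (Consistently, $d^2\Omega=0$ only forces $d\theta\wedge\Omega=0$, i.e.\ $d\theta$ primitive, which does not exclude a $(2,0)+(0,2)$ component.)

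The ingredient you are missing is the closedness of the Lee form. The paper's own proof uses it silently: it passes to a neighbourhood where $\theta=df$ (possible only because $d\theta=0$) and then verifies the pointwise identity $\bigl((i_J d)^2 f\bigr)(X,Y)=N_J(X,Y)f$, which vanishes by integrability of $J$. So the proposition must be read as a statement about Hermitian manifolds with closed Lee form --- which covers every application in the paper, since there the manifolds are LCK. In your bidegree language the correct argument is one line: locally $\theta=df$ gives $\theta^{1,0}=\partial f$, hence $(d\theta)^{2,0}=\partial\partial f=0$, where integrability enters only through $d=\partial+\bar\partial$ and $\partial^2=0$. Without the hypothesis $d\theta=0$ there is nothing to prove, because the claim is simply not true.
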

\begin{proof}
Let $x\in M$ and
 $W$ an open neighbourhood of $x$ such that $\theta$ is exact on
$W$, i.e. $\theta = df$ for some $f \in C^\infty(W)$. Then $i_J d \eta =- i_J d
i_J \theta = -i_j d i_j d f $. It is a direct computation to check that
$((i_J d)^2 f)(X,Y) = N_J(X,Y) f $. Since $J$ is integrable, we get that
$i_J d\eta =0$ at $x$.
\end{proof}
The \emph{Lee} and \emph{anti-Lee vector fields} $U,V$ are defined as the metric duals of $\theta, \eta$, respectively.
Clearly, one has $|V|^2=|U|^2=|\eta|^2=|\theta|^2$.

A Hermitian manifold $(M, J, g)$ is said to be
\emph{locally conformal Kähler} (LCK) if the fundamental 2-form $\Omega$
and the Lee  $1$-form $\theta$ satisfy the identities
\[
d \Omega = \theta \wedge \Omega,\quad d\theta=0.
\]
For every LCK manifold one can show that
\begin{equation*}
\begin{aligned}
i_U \Omega = -\eta,\quad i_V \Omega = \theta.
\end{aligned}
\end{equation*}

An LCK manifold is said to be a \emph{Vaisman manifold}  if  the Lee 1-form $\theta$ is parallel with respect to the Levi-Civita connection of $g$.
On every Vaisman manifold the Lee vector field $U$ is Killing and is an infinitesimal
automorphism of the complex structure. Hence
\(
{\mathcal L}_U \Omega = 0
\)
(see, for instance, Propositions 4.2 and 4.3 in \cite{DrOr}; see also \cite{vaisman_roma}).
Now, recall the following definition from \cite{lcs-Vai}.
\begin{definition}
A \emph{locally conformal symplectic (LCS) structure of the first kind} on a manifold $M$ of dimension $2n+2$  is given by a triple $(\Omega,\theta,U)$
where $\Omega$ is a nondegenerate 2-form  such that
 \(
d\Omega = \theta \wedge \Omega,
\)
for some closed 1-form $\theta$, and $U$ is a vector field such that $\theta(U)\neq 0$ and
 \(
 \lie_U\Omega=0
 \).
\end{definition}

Clearly, from the above definitions one can see that a Vaisman manifold has an underlying LCS structure of the first kind. An alternative equivalent characterization of an LCS structure of the first kind is the following one.
\begin{definition}
A \emph{LCS structure of the first kind} on a manifold $M$ of dimension $2n+2$ is a pair $(\theta, \eta)$ of $1$-forms such that:
\begin{enumerate}[$(i)$]
\item
$\theta$ is closed;
\item
the rank of $d\eta$ is $2n$ and $\theta \wedge \eta \wedge (d\eta)^{n}$ is a volume form.
\end{enumerate}
\end{definition}

So, in any LCK structure with an underlying LCS structure of the first kind
(e.g. in any Vaisman manifold) the anti-Lee $1$-form $\eta$  induces a contact
structure on the kernel of the Lee $1$-form $\theta$. In other words, the
distribution $\ker \eta$ is  maximally nonintegrable.
In this paper we will consider the opposite case, where $\ker\eta$ is integrable.

\strut

In the remaining part of this section we prove two properties for general LCK
manifolds that are useful to study the case when the anti-Lee vector field is Killing.
\begin{proposition}
\label{lieV}
Let $(M,J,g)$ be an LCK manifold.
Then
\begin{equation}
\label{lievg}
(\lie_V g)^\# = (\lie_V J)\circ J.
\end{equation}
\end{proposition}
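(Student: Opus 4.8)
The plan is to funnel everything through the behaviour of $V$ on the fundamental $2$-form $\Omega$, and the first step I would carry out is purely formal. On any almost Hermitian manifold, Lie-differentiating the defining relation $\Omega(X,Y)=g(X,JY)$ by means of the Leibniz rule for $\lie_V$, together with $(\lie_V J)Y=[V,JY]-J[V,Y]$, yields the pointwise bridge identity
\[
(\lie_V \Omega)(X,Y) = (\lie_V g)(X,JY) + g\bigl(X,(\lie_V J)Y\bigr).
\]
This links the three Lie derivatives in the statement, so the whole proposition will follow once I control the single object $\lie_V\Omega$.

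The key step—and the only place the LCK hypotheses genuinely enter—is to show that $\lie_V\Omega=0$. Here I would apply Cartan's formula $\lie_V\Omega=d(i_V\Omega)+i_V(d\Omega)$. Since $i_V\Omega=\theta$ and $\theta$ is closed, the first term vanishes; since $d\Omega=\theta\wedge\Omega$, the second term is
\[
i_V(\theta\wedge\Omega)=\theta(V)\,\Omega-\theta\wedge i_V\Omega=\theta(V)\,\Omega-\theta\wedge\theta.
\]
Now $\theta(V)=(i_V\Omega)(V)=\Omega(V,V)=0$ by antisymmetry of $\Omega$, and $\theta\wedge\theta=0$, so indeed $\lie_V\Omega=0$.

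With this, the bridge identity collapses to $(\lie_V g)(X,JY)=-g(X,(\lie_V J)Y)$ for all $X,Y$. Replacing $Y$ by $-JY$ and using $J^2=-\id$ converts this into $(\lie_V g)(X,Y)=g\bigl(X,(\lie_V J)(JY)\bigr)$; then invoking the symmetry of $\lie_V g$ (the Lie derivative of the symmetric tensor $g$) I rewrite the left-hand side as $(\lie_V g)(Y,X)$ and obtain $g\bigl(((\lie_V J)\circ J)X,Y\bigr)=(\lie_V g)(X,Y)$ for every $Y$, which is exactly the defining relation of $(\lie_V g)^\#$. Hence $(\lie_V g)^\#=(\lie_V J)\circ J$. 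The only delicate point—the main obstacle, such as it is—is the sign and index bookkeeping in this last paragraph, where skew-adjointness of $J$, the relation $J^2=-\id$, and the symmetry of $\lie_V g$ must be applied in the correct order; the substantive content is the vanishing $\lie_V\Omega=0$ established above.
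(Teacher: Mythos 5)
Your proof is correct and follows essentially the same route as the paper: Lie-differentiate the relation $\Omega = g\circ(\id\otimes J)$ using the Leibniz rule, invoke $\lie_V\Omega=0$, and compose with $J$ (via $J^2=-\id$) to raise the index. The only difference is that you also derive $\lie_V\Omega=0$ from Cartan's formula together with $i_V\Omega=\theta$ and $d\theta=0$, whereas the paper simply quotes this standard fact.
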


\begin{proof}
Recall that in any LCK manifold one has $\lie_V \Omega=0$. Thus
\begin{equation*}
\begin{aligned}
0 = \lie_V (g\circ (\id\otimes J)) = (\lie_V g) (\id \otimes J) + g \circ
(\id \otimes \lie_V J).
\end{aligned}
\end{equation*}
Multiplying the above equality by $\id \otimes J$ on the right hand we get
\begin{equation*}
\begin{aligned}
0 = - (\lie_V g) (\id\otimes \id) + g \circ (\id \otimes (\lie_V J)\circ J).
\end{aligned}
\end{equation*}
Thus
\begin{equation*}
\lie_V g = g \circ (\id \otimes (\lie_V J)\circ J).
\end{equation*}
By raising the index we get the claim.
\end{proof}
As a consequence, when the anti-Lee vector field $V$ is Killing we obtain that it commutes with the Lee vector field $U$.
\begin{corollary}
\label{vkilling}
Let $(M,J,g)$ be an LCK manifold such that the anti-Lee vector field $V$ is Killing.
Then $\lie_V J=0$ and hence
\begin{equation*}
[U, V]=0.
\end{equation*}
\end{corollary}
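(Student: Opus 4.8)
The plan is to read the first assertion directly off Proposition~\ref{lieV} and then use the pointwise relation between $U$ and $V$ to deduce the commutator; everything here reduces to linear algebra once that proposition is available.

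First, since $V$ is Killing we have $\lie_V g = 0$, so the left-hand side of \eqref{lievg} vanishes and we are left with $(\lie_V J)\circ J = 0$. Because $J$ is invertible (indeed $J^2 = -\id$), composing on the right with $J$ yields $(\lie_V J)\circ J^2 = -\lie_V J = 0$, so $\lie_V J = 0$. This is the first claim, and it is the conceptual content of the corollary: the Killing condition on $V$ forces $V$ to be an infinitesimal automorphism of the complex structure.

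For the bracket I would first record the algebraic relation $V = JU$ (equivalently $U = -JV$). This follows from $\theta = i_J\eta$ together with the skew-symmetry of $J$ with respect to $g$: for every vector field $X$ one has $g(U,X) = \theta(X) = \eta(JX) = g(V,JX) = -g(JV,X)$, whence $U = -JV$. Using $[U,V] = -\lie_V U$ and substituting $U = -JV$, I then compute
\begin{equation*}
[U,V] = -\lie_V(-JV) = (\lie_V J)V + J\,\lie_V V = 0,
\end{equation*}
since $\lie_V J = 0$ by the first part and $\lie_V V = [V,V] = 0$.

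The only point requiring care is the sign bookkeeping in the identity $U = -JV$, which depends on the chosen conventions for $i_J$ and for the metric duality $\theta = g(U,\cdot)$, $\eta = g(V,\cdot)$; with those conventions fixed it is a one-line check, and the rest of the argument is immediate.
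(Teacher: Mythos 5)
Your proof is correct and follows essentially the same route as the paper: read $\lie_V J=0$ off Proposition~\ref{lieV} using the invertibility of $J$, then expand a Lie derivative via the Leibniz rule together with the relation $V=JU$ (equivalently $U=-JV$, which you verify carefully from $\theta=i_J\eta$) to conclude $[U,V]=0$. The paper evaluates $(\lie_V J)U=[V,V]-J[V,U]$ while you expand $\lie_V(JV)$, but these are the same computation up to rearrangement.
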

\begin{proof}
Let $V$ be Killing. Then by Proposition ~\ref{lieV} we have
\begin{equation*}
\lie_V J=-(\lie_V g)^\#\circ J = 0.
\end{equation*}
Thus
\begin{equation*}
0=(\lie_V J)U=[V, V] - J [V, U]=- J [V, U].
\end{equation*}
Hence $U$ and $V$ commute.
\end{proof}

\section{Integrable LCK manifolds}
So far, the most studied class of LCK manifolds are Vaisman manifolds, for
which the distribution $\ker \eta$ is maximally non-integrable.
In this paper we will study the class of LCK manifolds for which $\ker \eta = V^\perp$ is
integrable.

Notice, that to be able to speak about the integrability of $V^\perp$, we have to
assume that $V$ is non-zero everywhere, as otherwise $\ker\eta$
cannot be a smooth distribution, as its dimension jumps in the points where $V_x =0$.

\begin{definition}
We say that an LCK manifold $(M, J,g)$ is \emph{integrable} if the anti-Lee vector field $V$ is everywhere
non-zero and $V^\perp$ is an integrable distribution.
\end{definition}

\begin{proposition}
\label{sss}
Let $(M,J,g)$ be an integrable
 LCK manifold.
Then $|\theta|^4 d\eta = d\eta(U,V)\cdot \theta \wedge \eta$.
\end{proposition}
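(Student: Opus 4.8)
The plan is to use the Frobenius theorem to convert integrability into a concrete condition on $\eta$, and then to exploit Proposition~\ref{ijdeta}. Since $V$ is everywhere nonzero, $\ker\eta = V^\perp$ is a genuine smooth distribution of codimension one, and by the Frobenius theorem it is integrable if and only if $\eta\wedge d\eta = 0$. Contracting this $3$-form with $V$ and using $i_V\eta = \eta(V) = |\theta|^2$ gives $0 = i_V(\eta\wedge d\eta) = |\theta|^2\,d\eta - \eta\wedge i_V d\eta$, so that $d\eta = \eta\wedge\beta$ with the globally defined smooth $1$-form $\beta := |\theta|^{-2}\,i_V d\eta$; here the hypothesis that $|\theta|^2 = |V|^2$ never vanishes is what makes $\beta$ smooth and global. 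Thus the first task is to produce this factorization of $d\eta$.

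Next I would feed this factorization into Proposition~\ref{ijdeta}. Recall that $i_J$ is a degree-zero derivation of the exterior algebra with $i_J\eta = \theta$, so applying it to $d\eta = \eta\wedge\beta$ and using $i_J d\eta = 0$ yields
\[
0 = i_J d\eta = \theta\wedge\beta + \eta\wedge i_J\beta.
\]
This is the key algebraic identity, and everything else amounts to extracting information from it.

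Finally I would contract this identity with the Lee vector field $U$, using the relations from the preliminaries: $V = JU$, whence $\eta(U) = g(V,U) = g(JU,U) = 0$ and $(i_J\beta)(U) = \beta(JU) = \beta(V)$, together with $\theta(U) = |\theta|^2$. Since $i_U$ is an antiderivation, contracting gives $|\theta|^2\beta - \beta(U)\,\theta - \beta(V)\,\eta = 0$, so $\beta$ is forced into the plane spanned by $\theta$ and $\eta$:
\[
|\theta|^2\beta = \beta(U)\,\theta + \beta(V)\,\eta.
\]
Substituting back, the relation $\eta\wedge\eta = 0$ kills the $\eta$-term, giving $d\eta = \eta\wedge\beta = |\theta|^{-2}\beta(U)\,\eta\wedge\theta$. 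Evaluating on $(U,V)$ with $\eta(U) = \theta(V) = 0$ and $\eta(V) = \theta(U) = |\theta|^2$ yields $d\eta(U,V) = -\beta(U)\,|\theta|^2$, and comparing the two expressions produces the claimed identity $|\theta|^4\, d\eta = d\eta(U,V)\,\theta\wedge\eta$.

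I expect the only delicate point to be the first step: justifying that integrability yields a globally smooth factor $\beta$ rather than merely a local one, which is precisely where the standing assumption that $V$ is nowhere zero is essential. The remainder is purely algebraic manipulation of the identity coming from Proposition~\ref{ijdeta}, with the Hermitian relations $V = JU$ and $\eta(U) = 0$ carrying the argument.
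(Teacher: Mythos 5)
Your argument is correct and uses exactly the same ingredients as the paper's proof: the Frobenius condition $\eta\wedge d\eta=0$, the contraction with $V$ giving $\ff d\eta=\eta\wedge i_Vd\eta$, and Proposition~\ref{ijdeta} applied via the derivation property of $i_J$, followed by a contraction with $U$. The only cosmetic difference is that you package $|\theta|^{-2}i_Vd\eta$ as an auxiliary $1$-form $\beta$ and apply $i_J$ to the factorized $d\eta=\eta\wedge\beta$, whereas the paper applies $i_J$ directly to $\eta\wedge d\eta=0$ to obtain $\theta\wedge d\eta=0$ first; the computations are equivalent.
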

\begin{proof}
Since $V^\perp = \ker \eta$, we get that the integrability of $V^\perp$ is
equivalent to $\eta \wedge d\eta =0$.
Applying $i_V$ to $\eta \wedge d\eta =0$, we get
$\ff d\eta - \eta \wedge i_Vd\eta =0$, i.e. $\ff d\eta = \eta \wedge i_Vd\eta$.

Next apply $i_J$ to $\eta\wedge d\eta=0$. Since $i_J \eta =\theta$
and by Proposition~\ref{ijdeta} one has $i_Jd\eta=0$, we get $\theta \wedge d\eta=0$.
Applying $i_{{}_U}$ to the last equation we obtain $\ff d\eta - \theta \wedge i_{{}_U}d\eta$,
i.e. $\ff d\eta = \theta \wedge i_{{}_U} d\eta$.

Combining it with $\ff d \eta = \eta \wedge i_V d\eta$, we get $\eta \wedge
i_Vd\eta = \theta \wedge i_{ {}_U}d\eta$. Applying $i_V$, we
obtain
$\ff \cdot i_Vd\eta=\theta \cdot d\eta(V,U)$. Reusing $\ff d\eta = \eta \wedge
i_Vd\eta$, we get
\begin{equation*}
|\theta|^4 d\eta =  \eta \wedge( \ff \cdot i_Vd\eta) = \eta \wedge
d\eta(V,U) \cdot \theta = d\eta(U,V) \cdot \theta \wedge \eta.
\end{equation*}
\end{proof}
The next result will allow to compute the term $d\eta(U,V)$ which appears in the above expression of $d\eta$
and will be used to compute the commutator of the Lee and the anti-Lee vector fields in any integrable LCK manifold.
\begin{theorem}\label{delta}
Suppose $M^{2n+2}$ is an integrable LCK manifold.
Then
\begin{equation}\label{eq:delta} 
\ff \delta \theta + d\eta(U,V) + n |\theta|^4  =0.
\end{equation}
In particular, if $M^{2n+2}$  has unitary Lee  vector field one gets
\begin{equation}\label{eq:delta1}
\delta \theta + d\eta (U,V) + n =0.
\end{equation}
\end{theorem}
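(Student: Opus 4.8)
The plan is to compute $\delta\theta$ directly, starting from the identity $\theta=i_V\Omega$ that holds on any LCK manifold, and to reduce the computation to two contractions against the complex structure: one controlled by the defining relation $\delta\Omega=n\eta$, the other by the integrability hypothesis through Proposition~\ref{sss}.

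First I would fix a local orthonormal frame $\{e_i\}$ and expand the codifferential. Since $\theta=i_V\Omega$, a short calculation (in which the $\nabla_{e_i}e_i$ contributions cancel) gives
\[
\delta\theta = -\sum_i(\nabla_{e_i}\Omega)(V,e_i) - \sum_i\Omega(\nabla_{e_i}V,e_i).
\]
The first sum is exactly $(\delta\Omega)(V)$, which by the definition $\eta=\frac1n\delta\Omega$ and $|V|^2=\ff$ equals $n\,\eta(V)=n\ff$. For the second sum I would split $\nabla V$ into its symmetric and skew parts, writing $g(\nabla_X V,Y)=\frac12(\lie_V g)(X,Y)+\frac12 d\eta(X,Y)$. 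Because $\lie_V g$ is symmetric while $J$ is skew, the contraction $\sum_i(\lie_V g)(e_i,Je_i)$ vanishes, so
\[
\sum_i\Omega(\nabla_{e_i}V,e_i)=\sum_i g(\nabla_{e_i}V,Je_i)=\tfrac12\sum_i d\eta(e_i,Je_i).
\]

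The remaining term $\sum_i d\eta(e_i,Je_i)$ is where integrability enters. Proposition~\ref{sss} gives $d\eta=\fm\,d\eta(U,V)\,\theta\wedge\eta$, so $\sum_i d\eta(e_i,Je_i)=\fm\,d\eta(U,V)\sum_i(\theta\wedge\eta)(e_i,Je_i)$. Using $\theta=\Omega(V,\cdot)$ and $\eta=-\Omega(U,\cdot)$ one finds $\eta(Je_i)=\theta(e_i)$ and $\theta(Je_i)=-\eta(e_i)$, whence $(\theta\wedge\eta)(e_i,Je_i)=\theta(e_i)^2+\eta(e_i)^2$ and $\sum_i(\theta\wedge\eta)(e_i,Je_i)=|\theta|^2+|\eta|^2=2\ff$. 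Substituting back yields $\sum_i\Omega(\nabla_{e_i}V,e_i)=\fl\,d\eta(U,V)$, hence $\delta\theta=-n\ff-\fl\,d\eta(U,V)$; multiplying through by $\ff$ gives \eqref{eq:delta}, and setting $\ff=1$ gives \eqref{eq:delta1}.

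The main obstacle is the bookkeeping of signs and conventions. One must fix the sign of the codifferential so that the paper's normalization $\delta\Omega=n\eta$ holds; this is what makes the first sum $+n\ff$ rather than its negative, and the opposite choice would break the final identity. Beyond that, the two auxiliary facts — that $\sum_i(\lie_V g)(e_i,Je_i)=0$ by the symmetric–skew pairing and that $\sum_i(\theta\wedge\eta)(e_i,Je_i)=2\ff$ — are the precise inputs that, together with Proposition~\ref{sss}, convert the raw trace into the stated relation among $\delta\theta$, $d\eta(U,V)$ and $\ff$; everything else is routine.
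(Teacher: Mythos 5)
Your argument is correct, and it takes a genuinely different route from the paper's. The paper works in an adapted orthonormal frame ($X_1=|\theta|^{-1}U$, $JX_k=X_{n+1+k}$), rewrites $\delta\theta$ as $\sum_k g([U,X_k],X_k)$, and evaluates the $J$-paired contributions by computing $d\Omega(X_k,JX_k,U)=-\ff$ from the LCK equation $d\Omega=\theta\wedge\Omega$, using integrability only through the fact that $[X_k,JX_k]\perp V$; the exceptional pair $(X_1,X_{n+2})$ produces the $d\eta(U,V)$ term. You instead start from $\theta=i_V\Omega$, split $\delta\theta$ into $-(\delta\Omega)(V)=-n\ff$ (which is just the definition $\eta=\frac1n\delta\Omega$) plus the trace $\sum_i g(\cov_{e_i}V,Je_i)$, kill the symmetric part of $\cov V$ against the skew $J$, and evaluate $\frac12\sum_i d\eta(e_i,Je_i)=\fl\,d\eta(U,V)$ by feeding in the full strength of Proposition~\ref{sss} together with $\sum_i(\theta\wedge\eta)(e_i,Je_i)=2\ff$. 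This is legitimate and non-circular, since Proposition~\ref{sss} is established independently of the theorem; it is also shorter and frame-independent in spirit. What the paper's longer computation buys is that it never invokes Proposition~\ref{sss} (only the orthogonality consequence of integrability) and it isolates the divergence identity $\delta\theta=\sum_k g([U,X_k],X_k)$, which is reused later to get $\delta\theta=\tr(\ad_U)$ in the Lie algebra setting. One small point of wording: in your decomposition $\delta\theta=-\sum_i(\cov_{e_i}\Omega)(V,e_i)-\sum_i\Omega(\cov_{e_i}V,e_i)$, it is the sum $\sum_i(\cov_{e_i}\Omega)(V,e_i)$ \emph{without} the leading minus that equals $(\delta\Omega)(V)=n\ff$ under the convention $\delta=-\sum_k i_{X_k}\cov_{X_k}$; with the minus reinstated the first term contributes $-n\ff$, which is what your final formula $\delta\theta=-n\ff-\fl\,d\eta(U,V)$ correctly records.
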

\begin{proof}
Let $x\in M$. Choose a neighbourhood of $x$ where
$\theta$ does not vanish and there is an orthonormal frame
$X_1$, \dots $X_{2n+2}$,  such that $X_1 = |\theta|^{-1}U$ and $JX_k =
X_{n+1+k}$ for $1\le k \le n+1$. In particular,  $X_{n+2} = |\theta|^{-1}V$.
It is well known that codifferential $\delta$ can be expressed as $-\sum_{k=1}^{2n+2}
i_{X_k} \cov_{X_k}$. Applying this expression to $\theta = g\circ (U\otimes \id)$, we get
\begin{equation*}
\begin{aligned}
\delta \theta & = -\sum_{k=1}^{2n+2} g (\cov_{X_k} U, X_k).
\end{aligned}
\end{equation*}
Notice that we also have
\begin{equation*}
\begin{aligned}
\sum_{k=1}^{2n+2} g([U,X_k], X_k) = - \sum_{k=1}^{2n+2} g(\cov_{X_k} U , X_k),
\end{aligned}
\end{equation*}
since $2g(\cov_U X_k, X_k) =-(\cov_U g)(X_k,X_k) + U(g(X_k,X_k)) =0 $ for all
$k$.
Thus
\begin{equation}
\label{deltatheta}
\begin{aligned}
\delta \theta & = \sum_{k=1}^{2n+2} g([U,X_k], X_k).
\end{aligned}
\end{equation}
As the next step we relate $g([U,X_k],X_k) + g([U,JX_k],JX_k)$ with
$d\Omega(X_k,JX_k,U)$ for $2\le k \le n+1$. We get
\begin{equation*}
\begin{aligned}
d\Omega(X_k,JX_k,U) &= (\theta \wedge \Omega) (X_k,JX_k,U)=
 \Omega(X_k,JX_k)\theta(U) \\ & = g(X_k,J^2X_k )\ff = -
\ff.
\end{aligned}
\end{equation*}
 Now we compute $d\Omega(X_k,JX_k,U)$ using the definition of exterior derivative
\begin{align*}
d\Omega(X_k,JX_k,U) & = X_k (\Omega(JX_k,U)) - (JX_k)(\Omega(X_k,U)) +
U(\Omega(X_k,JX_k))\\[2ex] &
\phantom{=}  -  \Omega([X_k,JX_k], U) -  \Omega([JX_k,U], X_k) -  \Omega([U,X_k], JX_k).
\end{align*}
The first three terms in the above sum vanish, and we get
\begin{equation*}
d\Omega(X_k,JX_k,U) = -  i_U\Omega([X_k,JX_k]) +  g([U,JX_k], JX_k)
+  g([U,X_k], X_k).
\end{equation*}
By separate computation we have
\begin{equation*}
 i_U\Omega([X_k,JX_k]) = - \eta ([X_k,JX_k]) = 0
\end{equation*}
since the integrability of $V^\perp$
implies that $[X_k,JX_k]$ is orthogonal to $V$, as both  $X_k$ and $JX_k$ are orthogonal to $V$.
Thus
for $2\le k \le n+1$
\begin{equation}
\begin{aligned}
g([U,JX_k], JX_k) +  g([U,X_k], X_k) = -\ff.
\end{aligned}
\label{uxk}
\end{equation}

Now we compute
$g([U,X_{n+2}], X_{n+2}) +  g([U,X_1], X_1)$.
Since $X_1$ is proportional to $U$, we have that $d\Omega(X_1, X_{n+2},U)=0$. Hence
\begin{equation*}
\begin{aligned}
0 &= d\Omega\left( X_1,X_{n+2},U \right)  =
X_1 (\Omega(X_{n+2},U)) - X_{n+2}(\Omega(X_1,U)) + U(\Omega(X_1,X_{n+2}))\\[1ex] & \phantom{====} -  \Omega([X_1,X_{n+2}], U) -  \Omega([X_{n+2},U], X_1) -  \Omega([U,X_1], X_{n+2})
\\[2ex] & = X_1 (g (X_{n+2},V)) - X_{n+2}(g(X_1,V)) + U(0) \\[1ex] &
\phantom{====}  + i_U\Omega ([X_1,X_{n+2}]) + g([U,X_{n+2}],X_{n+2}) + g([ U,X_1
],X_1)
\\[2ex] & = X_1 (\eta(X_{n+2})) - X_{n+2}(\eta(X_1))   - \eta ([X_1,X_{n+2}]) + g([U,X_{n+2}],X_{n+2}) + g(\left[ U,X_1 \right],X_1)
\\[2ex] & = d\eta (X_1,X_{n+2}) + g([U,X_{n+2}],X_{n+2}) + g(\left[ U,X_1 \right],X_1)
\end{aligned}
\end{equation*}
Hence
\begin{equation}
\label{ux1}
\begin{aligned}
g([U,X_{n+2}],X_{n+2}) + g(\left[ U,X_1 \right],X_1) & =-   d\eta (X_1,X_{n+2})
 = - |\theta|^{-2} d\eta(U,V).
\end{aligned}
\end{equation}
Substituting \eqref{uxk} and \eqref{ux1} in \eqref{deltatheta},
we get
\begin{equation*}
\delta \theta = -n \ff - |\theta|^{-2} d\eta(U,V).
\end{equation*}
Multiplying with $\ff$, this gives
\begin{equation*}
\ff\delta \theta + d\eta(U,V) + n |\theta|^4  =0.
\end{equation*}
\end{proof}
Now we are ready to give a better expression for $d\eta$ and to compute the commutator $[U, V]$.
\begin{theorem}\label{deta}
Let $(M^{2n+2}, J, g)$ be an integrable LCK manifold.
Then
\begin{align*}
d\eta & = \left(\frac{\delta \theta}{\ff}  + n \right) \eta \wedge \theta,\\
\left[ U,V \right] & = \big(\,\delta \theta  + n \ff\,\big)V + J( \grad \ff).
\end{align*}
\end{theorem}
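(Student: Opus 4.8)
The first identity is essentially a corollary of the two preceding results, and I would dispose of it first. Proposition~\ref{sss} gives $\fq\,d\eta = d\eta(U,V)\,\theta\wedge\eta$, while rewriting \eqref{eq:delta} yields $d\eta(U,V) = -\ff\,\delta\theta - n\fq$. Substituting the second into the first and dividing by $\fq$ (which is legitimate, since in an integrable LCK manifold $V$, and hence $\theta$, is nowhere zero) produces $d\eta = -\left(\frac{\delta\theta}{\ff}+n\right)\theta\wedge\eta$; reversing the wedge gives the claimed $d\eta = \left(\frac{\delta\theta}{\ff}+n\right)\eta\wedge\theta$.

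For the bracket, the plan is to determine the vector $[U,V]$ by computing its inner product with $U$, with $V$, and with an arbitrary $W$ in $U^\perp\cap V^\perp$, working in the orthonormal frame of the proof of Theorem~\ref{delta}, in which $U$ and $V$ are proportional to $X_1$ and $X_{n+2}=JX_1$ and the remaining frame vectors span $U^\perp\cap V^\perp$. I will use throughout that $V=JU$ (since $\eta(\cdot)=-\theta(J\cdot)=g(JU,\cdot)$) and the pairings $\theta(U)=\eta(V)=\ff$, $\theta(V)=\eta(U)=0$; note also that $\theta$ and $\eta$ vanish identically on the frame vectors spanning $U^\perp\cap V^\perp$. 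The $U$-component is forced by $d\theta=0$: expanding $0=d\theta(U,V)=U\theta(V)-V\theta(U)-\theta([U,V])$ gives $g([U,V],U)=\theta([U,V])=-V(\ff)$. The $V$-component follows from the first identity: since $(\eta\wedge\theta)(U,V)=-\ff^2$ we get $d\eta(U,V)=-(\delta\theta+n\ff)\ff$, and comparing with $d\eta(U,V)=U(\eta(V))-V(\eta(U))-\eta([U,V])=U(\ff)-\eta([U,V])$ yields $g([U,V],V)=\eta([U,V])=U(\ff)+(\delta\theta+n\ff)\ff$.

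The hard part is the components in $U^\perp\cap V^\perp$, and this is exactly where I would invoke the LCK identity $\lie_V\Omega=0$. Evaluating $\lie_V\Omega$ on $(U,W)$ with $W\in U^\perp\cap V^\perp$, and using that $\Omega(U,W)=g(U,JW)$ vanishes identically on the frame (because $JW$ is again, up to sign, a frame vector orthogonal to $U$), the defining formula for the Lie derivative reduces to $\Omega([U,V],W)=-\Omega(U,[V,W])$; rewriting both sides with $V=JU$ gives $g([U,V],JW)=-\eta([V,W])$. Next, $d\eta(V,W)=0$ (again from the first identity, since $\theta$ and $\eta$ vanish on $W$) expands via $d\eta(V,W)=V(\eta(W))-W(\eta(V))-\eta([V,W])$ to $\eta([V,W])=-W(\ff)$, whence $g([U,V],JW)=W(\ff)=g(\grad\ff,W)$. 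Replacing $W$ by $-JW$ (which again lies in $U^\perp\cap V^\perp$) converts this into $g([U,V],W)=-g(\grad\ff,JW)=g(J\,\grad\ff,W)$ for every $W\in U^\perp\cap V^\perp$.

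Finally I would assemble the three cases. Using $g(J\,\grad\ff,U)=-g(\grad\ff,V)=-V(\ff)$ and $g(J\,\grad\ff,V)=g(\grad\ff,U)=U(\ff)$, one checks that the vector $(\delta\theta+n\ff)V+J(\grad\ff)$ has precisely the inner products with $U$, with $V$, and with each $W\in U^\perp\cap V^\perp$ computed above; since the frame is a basis, this forces $[U,V]=(\delta\theta+n\ff)V+J(\grad\ff)$. I expect the genuine obstacle to be the perpendicular components: the $U$- and $V$-directions are pinned down by $d\theta=0$ and the shape of $d\eta$ alone, but the components in $U^\perp\cap V^\perp$ are invisible to the exterior-derivative relations among $U$ and $V$ and can only be extracted from the full LCK condition $\lie_V\Omega=0$ together with the action of $J$ on the frame.
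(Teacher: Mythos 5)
Your proof is correct and follows essentially the same route as the paper's: the first formula is obtained by combining Proposition~\ref{sss} with Theorem~\ref{delta}, and the bracket is determined componentwise along $\left\langle U\right\rangle\oplus\left\langle V\right\rangle\oplus\left\langle U,V\right\rangle^\perp$, the only cosmetic difference being that for the perpendicular component the paper expands $d\Omega(U,V,JX)$ via $d\Omega=\theta\wedge\Omega$ whereas you invoke the equivalent identity $\lie_V\Omega=0$. One harmless slip: the Lie-derivative expansion actually yields $\Omega([U,V],W)=\Omega(U,[V,W])$ (not $-\Omega(U,[V,W])$), but since $\Omega(U,[V,W])=-\eta([V,W])$ your subsequent equation $g([U,V],JW)=-\eta([V,W])$ and everything that follows is exactly right.
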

\begin{proof}
The first equation is an immediate consequence of Proposition~\ref{sss} and
Theorem~\ref{delta}.
To prove the second formula we compute the components of left and right sides
of the equation with respect
to the orthogonal decomposition $\left\langle U \right\rangle\oplus \left\langle
V \right\rangle \oplus \left\langle U,V \right\rangle^\perp$ of the space of
vector fields.  For the $V$-component,  by using Theorem~\ref{delta}, we get
\begin{equation*}
\begin{aligned}
g(\left[ U,V \right], V)  & =  \eta([U,V]) = U(\eta(V)) - d\eta(U,V)
= U(\ff) - \ff( \delta \theta + n \ff)
\\[2ex]\ & = -d(\ff) (JV)  - \ff( \delta \theta + n \ff)
\\[2ex]\ & = -g ( \grad\ff, JV)  - \ff( \delta \theta + n \ff)
\\[2ex]\ & = g \big( J\grad\ff, V\big)  - g(V,V) (\, \delta \theta + n\, \ff)
\\[2ex]\ & = g \big(\, J\grad\ff - ( \delta \theta + n \ff)V\,,\,\, V\,\big).
\end{aligned}
\end{equation*}
Next, applying $g(-,U)$ we obtain
\begin{equation*}
\begin{aligned}
g(\left[ U,V \right], U)  & =  \theta([U,V]) =- V(\theta(U)) - d\theta(U,V)
= - V(\ff)
\\[2ex] & = - d(\ff)(JU) = - g\big(\, \grad\ff , JU) =
\\[2ex] & = g \big(\, J\grad\ff - ( \delta \theta + n \ff)V\,,\,\, U\,\big).
\end{aligned}
\end{equation*}
Finally, for $X \in \left\langle U,V \right\rangle^\perp$
\begin{equation}
\label{guvx}
\begin{aligned}
g(\left[ U,V \right], X)  & =  - \Omega ([U,V],JX)
\\[2ex]  &  = d\Omega(U,V,JX) - U
(\Omega (V,JX)) + V (\Omega(U,JX))
\\[1ex] & \phantom{=}  - (JX) (\Omega (U,V)) + \Omega ( [V,JX], U)
+ \Omega ( [JX,U],V).
\end{aligned}
\end{equation}
Now, we calculate each term of the above formula
\begin{equation}\label{omegauvjx}
\begin{aligned}
d\Omega(U,V,JX) = (\theta \wedge \Omega)(U,V,JX) = \ff \Omega(V,JX) = - \ff
g(V,X) =0.
\end{aligned}
\end{equation}
\begin{equation}\label{jomegavjx}
\begin{aligned}
U(\Omega(V,JX)) =- U(g(V,X)) = 0,\quad V(\Omega(U,JX)) = -V(g(U,X)) =0.
\end{aligned}
\end{equation}
\begin{equation}
\label{jxomegauv}
\begin{aligned}
(JX)(\Omega(U,V)) = - (JX)(\ff) = - g\big( \grad\ff, JX) = g( J\grad \ff, X).
\end{aligned}
\end{equation}
\begin{equation}
\label{omegavjxu}
\begin{aligned}
\Omega ( [V,JX], U) & = g( [ V,JX ], V)
 = \eta ([ V,JX ])\\[2ex] &  =
- d\eta (V,JX) + V ( \eta (JX)) - (JX) (\eta (V))
\\[2ex] &= - (\delta \theta + n \ff) (\eta \wedge \theta ) (V,JX) + 0 - (d\ff)(JX)
\\[2ex] &= 0 - g ( \grad \ff, JX) \\[2ex] &  = g ( J\grad \ff, X).
\end{aligned}
\end{equation}
\begin{equation}\label{omegajxuv}
\begin{aligned}
\Omega ( [JX,U],V) & = - g( [JX,U], U) = - \theta ( [JX,U])
\\[2ex] & =  d\theta (JX,U) - (JX) (\ff) + U (\theta (JX))\\[2ex] &  = -(d\ff) (JX)
= - g ( \grad \ff, JX) \\[2ex] &  =  g ( J \grad \ff, X).
\end{aligned}
\end{equation}
Substituting (\ref{omegauvjx}$-$\ref{omegajxuv}) in \eqref{guvx}, we
get
\begin{equation}
\begin{aligned}
g(\left[ U,V \right], X)  & = g( J \grad \ff , X)  = g \big(\, J\grad\ff - ( \delta \theta + n \ff)V\,,\,\, X\,\big). \end{aligned}
\end{equation}
This completes the proof of the theorem.
\end{proof}

In any integrable LCK manifold the distribution $\left\langle U,V \right\rangle^\perp$ is always integrable, as $\left\langle U,V \right\rangle^\perp= \ker\eta\cap\ker\theta$ and $d \theta=0$. Recall that an LCK manifold is a \emph{Gauduchon} Hermitian manifold if and only
if $\delta \theta =0$. In this case if the norm of $\theta$  is unitary the
above theorem implies that also the complementary distribution $\left\langle U,V \right\rangle$ is integrable, as it is shown in the next result.
\begin{corollary}
Let $(M^{2n+2},J,g)$ be an integrable LCK manifold such that the metric is Gauduchon  and $|\theta| =1$.
Then
\begin{equation*}
\begin{aligned}
d\eta  =  n \, \eta \wedge \theta\\
\left[ U,V \right]  =  n \,V.
\end{aligned}
\end{equation*}
\end{corollary}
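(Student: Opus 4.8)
The plan is to obtain both identities by directly specializing Theorem~\ref{deta} to the two hypotheses. By the remark preceding the statement, the metric being Gauduchon means exactly $\delta\theta = 0$. The assumption $|\theta|=1$ says that the function $\ff$ is identically equal to $1$; hence $\fq = 1$ as well, and — the point that will do the real work in the second identity — since $\ff$ is constant its gradient vanishes, $\grad \ff = 0$.

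First I would substitute $\delta\theta = 0$ and $\ff = 1$ into the first formula of Theorem~\ref{deta}, namely $d\eta = \left(\frac{\delta\theta}{\ff} + n\right)\eta\wedge\theta$. The coefficient collapses to $\frac{0}{1} + n = n$, which yields $d\eta = n\,\eta\wedge\theta$. Next I would feed the same data into the second formula of Theorem~\ref{deta}, $[U,V] = \big(\delta\theta + n\,\ff\big)V + J(\grad\ff)$. Here the scalar coefficient of $V$ is $\delta\theta + n\,\ff = 0 + n = n$, while the term $J(\grad\ff) = J(0) = 0$ drops out entirely; thus $[U,V] = n\,V$.

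There is essentially no obstacle: the corollary is a pure specialization and requires no new geometric input beyond Theorem~\ref{deta}. The only two observations worth recording explicitly are that the Gauduchon hypothesis is to be read as $\delta\theta=0$, and that the constancy of $|\theta|^2$ forces $\grad\ff = 0$ so that the gradient correction term in the commutator formula disappears. Both are immediate, and together they reduce the statement to the two arithmetic substitutions above.
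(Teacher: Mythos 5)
Your proposal is correct and follows exactly the route the paper intends: the corollary is an immediate specialization of Theorem~\ref{deta}, obtained by setting $\delta\theta=0$ (Gauduchon) and $|\theta|^2=1$, so that the coefficient reduces to $n$ and the gradient term $J(\grad|\theta|^2)$ vanishes by constancy of $|\theta|^2$. Nothing is missing.
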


%

One interesting special case of LCK manifolds is obtained when the Lee vector field is Killing. However, this cannot happen
 in a compact or, more generally, complete integrable LCK manifold.
\begin{theorem}
\label{ukillingcomplete}
Let $(M,J,g)$ be an integrable
 LCK manifold such that the Lee vector field $U$ is Killing.
Then $M$ is not complete.
\end{theorem}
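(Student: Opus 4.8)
The plan is to assume that $M$ is complete and derive a contradiction, the mechanism being that the hypotheses force the Lee field $U$ to rescale the anti-Lee field $V$ exponentially along its own flow, while $|V|$ is forced to stay constant. The first step is to observe that the Killing hypothesis upgrades to parallelism: on any LCK manifold $\theta = U^\flat$ is closed, so the antisymmetric part of $\cov\theta$ vanishes, while $\lie_U g = 0$ kills its symmetric part. Hence $\cov\theta = 0$, i.e. $U$ is parallel. Consequently $\ff = |U|^2 = |V|^2$ is constant, and since Killing fields are divergence-free we also get $\delta\theta = 0$. Feeding $\delta\theta = 0$ and $\grad\ff = 0$ into Theorem~\ref{deta} collapses the commutator to $[U,V] = n\ff\,V$, where $n\ff$ is a strictly positive constant (as $V$ is nowhere zero, so $\ff>0$, and the dimension is at least $4$, so $n\ge 1$). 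This is the structural identity that drives everything.

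Now I would bring in completeness. A Killing field on a complete Riemannian manifold is itself complete, so $U$ generates a global one-parameter group $\{\phi_t\}_{t\in\R}$ of isometries. Since $\lie_U V = [U,V] = n\ff\,V$ with constant coefficient, the field $V$ obeys the exponential law $(\phi_t)_* V = e^{-n\ff\,t}\,V$ along this flow (the sign of the exponent being irrelevant for what follows). Taking norms pointwise and using that each $\phi_t$ is an isometry together with $|V|\equiv\sqrt{\ff}$, I obtain $\sqrt{\ff} = e^{-n\ff\,t}\sqrt{\ff}$ for all $t$, whence $e^{-n\ff\,t}\equiv 1$ and therefore $n\ff = 0$. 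This is impossible, so $M$ cannot be complete.

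The step requiring the most care is passing from the infinitesimal identity $\lie_U V = n\ff\,V$ to the finite exponential law: one differentiates $t\mapsto(\phi_t)^* V$, uses that the Lie derivative commutes with the flow it generates to produce a linear ODE with the constant coefficient $n\ff$, and integrates; the norm comparison then only needs that $\phi_t$ preserves lengths. It is exactly the parallelism of $U$ that makes this coefficient a genuine constant, for otherwise Theorem~\ref{deta} leaves the extra term $J(\grad\ff)$ and the clean scaling is destroyed. Completeness is invoked solely to extend $\{\phi_t\}$ to all of $\R$. I would remark, finally, that the identical computation carried out with the merely locally defined flow already forces $n\ff = 0$; in this sense the hypotheses are in fact incompatible, and the theorem isolates completeness as all that is needed to exhibit the contradiction.
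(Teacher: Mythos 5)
Your proof is correct, and it takes a genuinely different route from the paper's. The paper uses only two consequences of the Killing hypothesis, namely $\delta\theta=0$ and $g(\cov_V U,V)=0$; it keeps $\ff$ as an unknown function, combines Theorem~\ref{deta} with $U(\ff)=2g(\cov_U V,V)$ to obtain $U(\ff)=-2n\fq$, and integrates this along an integral curve $\gamma$ of $U$ to get $|\dot\gamma(t)|^2=1/(2nt+c)$, which blows up in finite time, so $U$ is incomplete and hence so is $M$. You instead use the Killing hypothesis at full strength: a Killing field with closed dual one-form is parallel (symmetric plus antisymmetric part of $\cov\theta$ both vanish), so $\ff$ is constant, Theorem~\ref{deta} collapses to $[U,V]=n\ff\,V$ with $n\ff>0$ constant, and the exponential scaling of $V$ under the global isometric flow of $U$ (global because Killing fields on complete manifolds are complete) contradicts the constancy of $|V|$. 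All the ingredients you invoke are standard and correctly applied, and you rightly note that $V$ nowhere zero and $n\ge 1$ are exactly what make $n\ff>0$. Your route also exposes something the paper's proof obscures: the contradiction is already local, since $0=U(\ff)=(\lie_U g)(V,V)+2g([U,V],V)=2n\fq$; equivalently, a parallel Lee form makes $(M,J,g)$ Vaisman, and the paper itself records that on a Vaisman manifold $\ker\eta$ is maximally non-integrable. So no integrable LCK manifold with Killing Lee field exists at all, complete or not, and the stated theorem is vacuously true; the paper's conclusion $U(\ff)=-2n\fq\neq 0$ coexists with the forced $U(\ff)=0$ only because the hypothesis class is empty. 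Your closing remark makes this explicit and is worth keeping.
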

\begin{proof}
By Theorem~\ref{deta} we have
\begin{equation}\label{nablaUV-nablaVU}
\nabla_U V- \nabla_V U = \big(\,\delta \theta  + n \ff\,\big)V + J( \grad \ff).
\end{equation}
Since $U$ is Killing we have that $\nabla U$ is $g$-skew symmetric
\begin{equation*}
g(\nabla_V U, V)=g(\nabla U (V), V)=0.
\end{equation*}
Moreover, as $\theta$ is the metric dual of $U$ one has $\delta \theta=0$. Hence, if we take the scalar product by $V$ of equation \eqref{nablaUV-nablaVU} we get
\begin{align*}
 g(\nabla_U V, V) & =    n \theta^4  + g(J( \grad \ff),V)\\
 			    &=   n \theta^4  + g(J( \grad \ff),JU)\\
			    &= n \theta^4  + g(\grad \ff,U)
\end{align*}
that is
\begin{equation}\label{nablaUV}
g(\nabla_U V, V) = n \fq  + U(\ff).
\end{equation}
Now,
\begin{equation*}
U(\ff)= U(g(V,V))=2 g(\nabla_U V, V).
\end{equation*}
Thus
\begin{equation*}
g(\nabla_U V, V) =\frac12 U(\ff).
\end{equation*}
So, equation~\eqref{nablaUV} becomes
\begin{equation*}
U(\ff) = -2 n \fq
\end{equation*}
that implies
\begin{equation}\label{uff}
\fm U(\ff)  = -2 n.
\end{equation}
Now, let $p\in M$ and $\gamma:I\to M$ be the maximal integral curve of $U$ such that $\gamma(0)=p$. Define
\begin{equation*}
h(t)=\frac{1}{|\theta|^2_{\gamma(t)}}=\frac{1}{|U|^2_{\gamma(t)}}=\frac{1}{|\dot\gamma(t))|^2}.
\end{equation*}
Hence
\begin{equation*}
\frac{d h}{dt}(\gamma(t)) =- |\theta^{-4}| d(|\theta|^2)(\dot\gamma(t))
						=-( |\theta^{-4}| U(|\theta|^2))(\gamma(t)).
\end{equation*}
Thus ~\eqref{uff} implies
\begin{equation*}
\frac{dh}{dt}    (t)= 2n
\end{equation*}
Thus  we get
\begin{equation*}
\frac{1}{|\dot\gamma(t))|^2}= 2 n t + c
\end{equation*}
where $c$ is a real constant. Hence
\begin{equation*}
|\dot\gamma(t))|^2=\frac{1}{2 n t + c}
\end{equation*}
so that  $|\dot\gamma(t))|^2$ is not defined for $t=-\frac{c}{2n}$. Hence $U$ does not admit global integral curves. Therefore $M$ cannot be a complete manifold.
\end{proof}

Next we  consider the  case of an  integrable LCK manifold such that the anti-Lee vector field $V$ is Killing. In this case we can consider the transversal structure with respect to the foliation generated by $V$.
In this case,  by Corollary~\ref{vkilling} we get $[U,V]=0$. Therefore, from Theorem~\ref{deta} we have
\begin{equation*}
\big(\,\delta \theta  + n \ff\,\big)V =- J( \grad \ff).
\end{equation*}
We obtain that if $M$ is compact then $\grad \ff$ must be nonzero. Otherwise we would get
\begin{equation}
\delta \theta  =- n \ff\
\end{equation}
that gives a contradiction, as it implies $\Delta \theta=0$ and hence $\delta \theta=0$. We obtain the following result.
\begin{corollary}
Let $(M^{2n+2},J,g)$ be an integrable LCK manifold such that the anti-Lee vector field $V$ is Killing and  $|\theta|$ is constant.
Then  $M$ is not compact.
\end{corollary}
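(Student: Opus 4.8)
The plan is to reduce the compactness question to the behavior of the function $\ff$ along the flow of the anti-Lee vector field $V$, exploiting that the hypothesis $V$ Killing forces $[U,V]=0$ via Corollary~\ref{vkilling}. Since $|\theta|$ is assumed constant, $\grad\ff = 0$, so the identity from Theorem~\ref{deta}, namely $\big(\delta\theta + n\ff\big)V = -J(\grad\ff)$, collapses to $\big(\delta\theta + n\ff\big)V = 0$. As $V$ is nowhere zero on an integrable LCK manifold, this yields the pointwise equation $\delta\theta = -n\ff$ everywhere on $M$.

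The core of the argument is then to derive a contradiction from $\delta\theta = -n\ff$ on a compact manifold. First I would observe that $\theta$ is closed, so $\lap\theta = (d\delta + \delta d)\theta = d(\delta\theta) = d(-n\ff) = -n\, d(\ff)$. But $|\theta|$ is constant, hence $\ff$ is constant, hence $d(\ff)=0$ and therefore $\lap\theta = 0$, i.e. $\theta$ is harmonic. On a compact Riemannian manifold a harmonic $1$-form has vanishing codifferential, so $\delta\theta = 0$. Feeding this back into $\delta\theta = -n\ff$ gives $n\ff = 0$, forcing $\ff \equiv 0$ since $n\ge 1$; but then $V$ vanishes identically, contradicting the defining condition that $V$ is everywhere nonzero in an integrable LCK manifold. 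This contradiction shows $M$ cannot be compact.

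The only delicate point is justifying the passage from $\delta\theta = -n\ff$ to $\delta\theta = 0$, and here I want to be careful about which route is cleanest. The excerpt's preceding discussion already notes that $\delta\theta = -n\ff$ ``implies $\lap\theta = 0$ and hence $\delta\theta = 0$'', so the intended mechanism is exactly the harmonicity argument above: compactness lets one conclude $\delta\theta=0$ from $\lap\theta=0$ (equivalently, integrating $\delta\theta = -n\ff$ against the volume form gives $0 = \int_M \delta\theta = -n\int_M \ff$, which again forces $\ff\equiv 0$ and hence $V\equiv 0$). I would present the integration version as an alternative, since $\int_M \delta\theta = 0$ on a compact manifold is elementary and avoids invoking the full harmonic decomposition. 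Either way the conclusion is that $\ff$ must vanish somewhere, contradicting nonvanishing of $V$.

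I expect the main obstacle to be not the analysis but the bookkeeping of hypotheses: one must use all three assumptions in concert — integrability (to guarantee $V$ is nowhere zero and that Theorem~\ref{deta} applies), $V$ Killing (to obtain $[U,V]=0$ through Corollary~\ref{vkilling}, which is what reduces Theorem~\ref{deta} to the stated identity), and $|\theta|$ constant (to kill the $J(\grad\ff)$ term). If any one of these is dropped the reduction to $\delta\theta = -n\ff$ fails, so the proof is essentially a matter of assembling these earlier results correctly rather than a new computation.
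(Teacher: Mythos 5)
Your proposal is correct and follows essentially the same route as the paper: Corollary~\ref{vkilling} gives $[U,V]=0$, Theorem~\ref{deta} then yields $(\delta\theta+n\ff)V=-J(\grad\ff)$, the constancy of $|\theta|$ kills the gradient term and forces $\delta\theta=-n\ff$, and compactness (via harmonicity of $\theta$, or equivalently $\int_M\delta\theta=0$) forces $\ff\equiv 0$, contradicting the nonvanishing of $V$. The integration-by-parts variant you offer is a slightly more elementary packaging of the same contradiction the paper draws from $\Delta\theta=0$.
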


\section{Integrable  LCK Lie algebras.}

An LCK structure on a Lie algebra $\liealg$ is given by $\Omega \in \bigwedge^2
\liealg^*$, $g \in S^2 \liealg^*$,  $\theta, \eta\in \liealg^*$, $J \in \End(\liealg)$ such that
\begin{enumerate}[i)]
\item $g$ is positively defined;
\item $\Omega = g \circ (\id \otimes J)$;
\item $d\Omega = \Omega \wedge \theta$, where $d$ is Chevalley-Eilenberg
differential;
\item $d \theta=0$;
\item $\eta = -i_J \theta$;
\item $J$ is a complex structure, i.e. $J^2 = -\id$ and  $N_J(X,Y) =0$ for all $X$, $Y\in \liealg$.
\end{enumerate}
We will denote by $U$ the metric dual of $\theta$ and by $V$ the metric dual of
$\eta$. To have an LCK structure on $\liealg$ is the same as to have a right
invariant LCK structure on the connected and  simply connected  Lie group $G$
with the Lie algebra $\liealg$.
The LCK Lie algebra $\liealg$ is said to be \emph{integrable} if $\eta \wedge d\eta=0$.

Notice that given a right-invariant LCK structure on a Lie group $G$, we have
$|\theta|$ constant. By rescaling the metric we can always assume that
$|\theta|=1$.

\begin{proposition}
\label{kaehlerideal}
Let $\liealg$ be an integrable LCK Lie algebra. Then $\hlie := \left\langle U,V
\right\rangle^\perp$ is a Kähler ideal in $\liealg$.
\end{proposition}
\begin{proof}
First we show that $\hlie$ is an ideal in $\liealg$.
Let $X\in\liealg$ and $Y\in \hlie$. Then
\begin{equation*}
\begin{aligned}
g([X,Y],U) & = \theta([X,Y])  = - d\theta (X,Y) = 0, \\
g([X,Y], V) & = \eta([X,Y])  = - d\eta (X,Y).
\end{aligned}
\end{equation*}
From Theorem~\ref{deta} we have that $d\eta(X,Y)$ is proportional to $(\eta
\wedge \theta)(X,Y)$, which is zero
since $\eta(Y) = g(Y,V) =0$ and  $\theta(Y)=g(Y,U) =0$ for $Y \perp \left\langle
U,V \right\rangle$. Hence $[X,Y]$ is orthogonal to both $U$ and $V$, i.e.
$\left[ X,Y \right] \in \hlie$.
Thus $\hlie$ is an ideal.

We have $J\hlie \subset \hlie$, since for every $Y \in \hlie$
\begin{equation*}
\begin{aligned}
g(JY,U) & = - g(Y,JU) = - g(Y,V) =0\\
g(JY,V)&  = - g(Y,JV) =  g(Y,U) =0.
\end{aligned}
\end{equation*}
Hence it induces an almost complex structure $J'$ on $\hlie$. Moreover, since $J$  is integrable then  $J':\hlie\to\hlie$ is also integrable. Next, define $\omega$ as the pullback of $\Omega$ via the inclusion $i:\hlie\to\liealg$
to $\hlie$. For any triple $X$, $Y$, $Z\in \hlie$, we get
\begin{equation*}
d\omega (X,Y,Z)=d i^* \Omega(X,Y,Z)= i^* d\Omega(X,Y,Z) = i^* (\theta \wedge \Omega)(X,Y,Z) = 0.
\end{equation*}
It is left to check that $g'(X,Y) = \omega(X,J'Y)$ is positive definite and Hermitian.
But $g'$ is just the restriction of $g$ to $\hlie$, so it is positive definite. Moreover,
\begin{equation*}
\begin{aligned}g'(Y,J'Z) & = g(Y,JZ) = - g(JY, Z) = - g'(J'Y,Z),
\end{aligned}
\end{equation*}
for $Y$, $Z\in \hlie$.
This completes the proof of the fact that $(g',J',\omega)$ is a Kähler structure on
$\hlie$.
\end{proof}
Let $\liealg_1$ and $\liealg_2$ be Lie algebras. An action of $\liealg_1$ on
$\liealg_2$ is a linear map
$\rho\colon \liealg_1 \otimes \liealg_2 \to \liealg_2$, such that the
corresponding map $\tilde\rho \colon \liealg_1 \to \End(\liealg_2)$ is a
homomorphism of Lie algebras and its image lies in the subspace of
derivations on $\liealg_2$.
Given an action $\rho$ of $\liealg_1$ on $\liealg_2$ we define a bracket on
$\liealg_1 \oplus \liealg_2$ by
\begin{equation*}
\left[ (a_1,b_1), (a_2,b_2) \right] = \left( \left[ a_1,a_2 \right] ,
\rho(a_1\otimes b_2) - \rho(a_2\otimes b_1)+ \left[ b_1,b_2 \right] \right).
\end{equation*}
It is well known that this bracket gives a structure of Lie algebra on
$\liealg_1 \oplus \liealg_2$. The resulting Lie algebra is denoted by $\liealg_1
\ltimes_\rho \liealg_2$ and is called a \emph{semi-direct} product of
$\liealg_1$ and $\liealg_2$.

\begin{corollary}
Let $\liealg$ be an integrable LCK Lie algebra and $\hlie=\left\langle U,V
\right\rangle^\perp$. Then $\liealg \cong \left\langle U,V \right\rangle
\ltimes_\rho \hlie$, where $\rho(U) = ad_U|_\hlie$ and $\rho(V) = ad_V |_\hlie$.
\label{semidirect}
\end{corollary}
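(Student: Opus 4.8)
The plan is to realize $\liealg$ as the asserted semidirect product by first recording, at the level of the underlying vector space, the orthogonal decomposition $\liealg = \langle U,V\rangle \oplus \hlie$, and then verifying the three structural requirements that make such a decomposition into a semidirect product: that $\hlie$ is an ideal, that $\langle U,V\rangle$ is a subalgebra, and that the adjoint action of $\langle U,V\rangle$ on $\hlie$ furnishes exactly the datum $\rho$. The first requirement is precisely Proposition~\ref{kaehlerideal}, so the only genuinely geometric point left to establish concerns $\langle U,V\rangle$.

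First I would show that $\langle U,V\rangle$ is closed under the bracket. The only bracket that needs inspection is $[U,V]$, and here Theorem~\ref{deta} does all the work, giving
\[
[U,V] = (\delta \theta + n\ff)V + J(\grad \ff).
\]
Since the LCK structure is right-invariant, $|\theta|$ is constant, so $\grad \ff = 0$ and the second term vanishes. Thus $[U,V] = (\delta \theta + n\ff)V \in \langle V\rangle \subseteq \langle U,V\rangle$, and $\langle U,V\rangle$ is a subalgebra.

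Next I would check that $\rho$ is a genuine action. Because $\hlie$ is an ideal, both $\ad_U$ and $\ad_V$ map $\hlie$ into $\hlie$, so $\rho(U) = \ad_U|_\hlie$ and $\rho(V) = \ad_V|_\hlie$ are well-defined endomorphisms of $\hlie$, and $\rho$ extends linearly to $\rho(a) = \ad_a|_\hlie$ for $a \in \langle U,V\rangle$. That each such map is a derivation of $\hlie$, and that $a \mapsto \ad_a|_\hlie$ is a Lie-algebra homomorphism $\langle U,V\rangle \to \End(\hlie)$, follow directly from the Jacobi identity in $\liealg$. It then remains only to expand a general bracket $[a_1 + b_1,\, a_2 + b_2]$ with $a_i \in \langle U,V\rangle$ and $b_i \in \hlie$: the subalgebra property handles the $\langle U,V\rangle$-component $[a_1,a_2]$, the ideal property places $[b_1,b_2]$ in $\hlie$, and the cross terms $[a_1,b_2] + [b_1,a_2]$ reassemble into $\rho(a_1)b_2 - \rho(a_2)b_1$, matching the defining bracket of $\langle U,V\rangle \ltimes_\rho \hlie$.

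The main obstacle, insofar as there is one, is confined entirely to the second step: everything depends on Theorem~\ref{deta} forcing $[U,V]$ to be proportional to $V$ once $|\theta|$ is constant. Granting that, the rest is the standard recognition of a semidirect product from a complementary subalgebra acting on an ideal, and no further computation involving $J$, $g$, or $\Omega$ is required.
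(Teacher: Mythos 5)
Your proposal is correct and follows essentially the same route as the paper: Proposition~\ref{kaehlerideal} supplies the ideal $\hlie$, and Theorem~\ref{deta} together with the constancy of $|\theta|$ for a right-invariant structure kills the $J(\grad\ff)$ term, forcing $[U,V]\in\langle V\rangle$ so that $\langle U,V\rangle$ is a subalgebra. Your extra verification that $\rho$ is a genuine action and that the brackets match the semidirect-product formula is the standard bookkeeping the paper leaves implicit.
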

\begin{proof}
Due to Proposition~\ref{kaehlerideal},
the only thing to check is that $\left\langle U,V \right\rangle$ is a Lie
subalgebra of $\liealg$. By Theorem~\ref{deta}, we have for the corresponding
right-invariant LCK structure on the $1$-connected Lie group of $\liealg$
\begin{equation*}
\begin{aligned}
\left[ U,V \right] = (\delta\theta + n \ff)V + J(\grad|\theta|^2) = (\delta \theta
+ n) V,
\end{aligned}
\end{equation*}
since $|\theta|=1$ by our assumption.
Hence $\left\langle U,V \right\rangle$ is a Lie subalgebra.
\end{proof}

We can say more about an integrable LCK Lie algebra if it is unimodular, that is, if $\tr(ad_X)=0$, for each $X\in\liealg$.
In~\cite{hano}, Hano proved that every unimodular Kähler Lie algebra $\hlie$ is
meta-abelian, i.e. $\hlie^{(2)} =0$. In particular, $\hlie$ is solvable. We
will show that there is a similar result for integrable LCK Lie algebras.
\begin{theorem}\label{unimodularsolvable}
Let $\liealg$ be a unimodular integrable LCK Lie algebra. Then $\liealg$ is
solvable.
\end{theorem}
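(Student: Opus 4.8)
The plan is to leverage the semidirect product structure $\liealg \cong \langle U,V\rangle \ltimes_\rho \hlie$ from Corollary~\ref{semidirect} together with the two structural facts we can extract from unimodularity. By Hano's theorem, since $\hlie$ is a Kähler ideal (Proposition~\ref{kaehlerideal}), the moment we know $\hlie$ is \emph{itself} unimodular we get that $\hlie$ is solvable. On the other side, $\langle U,V\rangle$ is a two-dimensional Lie algebra with bracket $[U,V]=(\delta\theta+n)V$, which is automatically solvable. A semidirect product of a solvable algebra acting on a solvable ideal is solvable, so the whole strategy reduces to two checkable points: first, that $\hlie$ inherits unimodularity from $\liealg$, and second, that solvability passes through the extension.

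\textbf{Key steps in order.}
First I would record that for a semidirect product $\liealg = \langle U,V\rangle \ltimes_\rho \hlie$ the short exact sequence $0 \to \hlie \to \liealg \to \langle U,V\rangle \to 0$ shows $\liealg$ is solvable as soon as both $\hlie$ and the quotient $\langle U,V\rangle$ are solvable; the quotient is $2$-dimensional, hence solvable, so everything hinges on $\hlie$. Second, I would prove $\hlie$ is solvable by way of Hano's theorem, which requires showing $\hlie$ is a \emph{unimodular} Kähler Lie algebra. The Kähler part is Proposition~\ref{kaehlerideal}; the unimodular part is the crux. Here I would compute $\tr(\mathrm{ad}^\hlie_Y)$ for $Y\in\hlie$ and relate it to $\tr(\mathrm{ad}^\liealg_Y)$, which vanishes by the unimodularity hypothesis on $\liealg$. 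Because $\hlie$ is an ideal and $\langle U,V\rangle$ is a complementary subalgebra, the matrix of $\mathrm{ad}^\liealg_Y$ in a basis adapted to $\hlie \oplus \langle U,V\rangle$ is block triangular: it preserves $\hlie$ (as $\hlie$ is an ideal) and, restricted to $\hlie$, acts as $\mathrm{ad}^\hlie_Y$, while its action on the quotient $\langle U,V\rangle$ must also contribute to the trace. The point to check carefully is that the trace of $\mathrm{ad}^\liealg_Y$ equals $\tr(\mathrm{ad}^\hlie_Y)$ plus the trace of the induced map on $\langle U,V\rangle$, and that the latter vanishes for $Y\in\hlie$ because $[Y,U]$ and $[Y,V]$ lie back in $\hlie$ (again by the ideal property), so the induced endomorphism on the two-dimensional quotient is zero.

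\textbf{The main obstacle.}
The delicate point is precisely this trace bookkeeping: establishing that $\tr(\mathrm{ad}^\liealg_Y) = \tr(\mathrm{ad}^\hlie_Y)$ for all $Y\in\hlie$, so that $\hlie$ is unimodular. The subtlety is that unimodularity of $\liealg$ a priori only controls $\tr(\mathrm{ad}^\liealg_X)$ for \emph{all} $X$, including $X=U$ and $X=V$, whereas Hano's theorem needs vanishing of $\tr(\mathrm{ad}^\hlie_Y)$ for $Y$ ranging over $\hlie$ only. One must therefore verify that restricting attention to $Y\in\hlie$ suffices and that no contribution leaks in from the $\langle U,V\rangle$ directions; this follows from $\hlie$ being an ideal, but it is the step that deserves genuine verification rather than assertion. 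Once unimodularity of $\hlie$ is secured, Hano gives solvability of $\hlie$, and the extension argument finishes the proof.
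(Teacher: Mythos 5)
Your proof is correct and rests on the same pillar as the paper's: Hano's theorem applied to the Kähler ideal $\hlie$ of Proposition~\ref{kaehlerideal}. The only mechanical difference is in how solvability of $\liealg$ is extracted afterwards: you invoke the standard fact that an extension of the ($2$-dimensional, hence solvable) quotient $\liealg/\hlie$ by the solvable ideal $\hlie$ is solvable, whereas the paper computes the derived series directly, using $d\theta=0$ to get $\liealg^{(1)}\subset U^\perp$ and then $\liealg^{(2)}\subset[U^\perp,U^\perp]\subset\hlie$, so that $\liealg^{(4)}\subset\hlie^{(2)}=0$; the two arguments are interchangeable. A point in your favour is that you explicitly verify the hypothesis Hano's theorem actually needs, namely that $\hlie$ is \emph{unimodular}: your block-triangular trace argument (for $Y\in\hlie$ the image of $\ad_Y$ lies in $\hlie$, so $\tr(\ad^{\liealg}_Y)=\tr(\ad^{\hlie}_Y)=0$) is exactly the justification the paper leaves tacit when it passes from ``$\hlie$ is a Kähler ideal'' to ``$\hlie^{(2)}=0$''.
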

\begin{proof}
By Proposition~\ref{kaehlerideal} $\hlie = \left\langle U,V
\right\rangle^\perp$ is a Kähler ideal in $\liealg$. Hence by the above
mentioned Hano's result $\hlie^{(2)} =0$.

Next we show that $\liealg^{(2)} \subset \hlie$. This will immediately imply
that $\liealg^{(4)}=0$, i.e. that $\liealg$ is solvable.
We have $\liealg^{(1)} \subset U^\perp$. Indeed, for any $X$, $Y\in \liealg$
\begin{equation*}
\begin{aligned}
g(\left[ X,Y \right], U) = \theta([X,Y]) = - d\theta(X,Y) =0.
\end{aligned}
\end{equation*}
Hence $\liealg^{(2)} \subset \left[ U^\perp, U^\perp \right]$ and
\begin{equation*}
\begin{aligned}
\left[ U^\perp , U^\perp \right] & = \left[ \left\langle V \right\rangle\oplus
\hlie, \left\langle V \right\rangle\oplus \hlie \right]
\subset \left\langle \left[ V,V \right] \right\rangle + \left[ V, \hlie \right]
+ \left[ \hlie, \hlie \right] \subset \hlie + \hlie^{(1)} = \hlie
\end{aligned}
\end{equation*}
imply $\liealg^{(2)} \subset \hlie$.
This finishes the proof.
\end{proof}
For an integrable LCK Lie algebra $\liealg^{2n+2}$, by Theorem~\ref{deta} we
get $\left[ U,V \right]= cV$, where
the constant
$c$ is given by $\delta \theta + n$.
In the case $\liealg$ is unimodular, we have
\begin{equation}
\label{uvnv}
\begin{aligned}
\left[ U,V \right] = nV.
\end{aligned}
\end{equation}
Indeed, arguing like in the proof of Theorem~\ref{delta} we get that in every
Riemannian Lie algebra holds an analogue of~\eqref{deltatheta}, namely  $\delta \theta =
\trace(\ad_U)$, which is zero since $\liealg$ is unimodular.

In the next theorem we give an explicit description of $\liealg^{(1)}$ for
unimodular integrable LCK Lie algebras.
We will use it later to prove that $\hlie$ is an abelian ideal in every
unimodular integrable LCK Lie algebra.
\begin{theorem}\label{uperp}
Let $\liealg$ be an integrable LCK Lie algebra, $U$ its Lee
vector, and $V = JU$. If $[U,V] = cV $ with $c>-1$, $c\not=0$, then $\left[ \liealg, \liealg \right] = U^\perp$.
In particular,  $\left[ \liealg, \liealg \right] = U^\perp$
for every unimodular integrable LCK Lie
algebra.
\end{theorem}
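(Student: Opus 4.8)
The plan is to prove the two inclusions of $[\liealg,\liealg]=U^\perp$ separately. The inclusion $[\liealg,\liealg]\subseteq U^\perp$ is immediate from $d\theta=0$, since $g([X,Y],U)=\theta([X,Y])=-d\theta(X,Y)=0$ for all $X,Y\in\liealg$. For the reverse inclusion, $c\neq0$ together with $[U,V]=cV$ gives $V=c^{-1}[U,V]\in[\liealg,\liealg]$, so it only remains to show $\hlie\subseteq[\liealg,\liealg]$, where $\hlie=\langle U,V\rangle^\perp$ is the Kähler ideal of Proposition~\ref{kaehlerideal}. Setting $A:=\ad_U|_\hlie$ and $B:=\ad_V|_\hlie$, both preserve $\hlie$ and $\operatorname{im}A+\operatorname{im}B=[U,\hlie]+[V,\hlie]\subseteq[\liealg,\liealg]$; so the whole statement reduces to the single claim $\operatorname{im}A+\operatorname{im}B=\hlie$.

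To handle this I would first record how $A$ and $B$ interact with the induced Kähler data $(g,J,\omega)$ on $\hlie$, where $\omega=g(\,\cdot\,,J\,\cdot\,)$ is the restriction of $\Omega$. From $\lie_U\Omega=\ff\,\Omega+\theta\wedge\eta-d\eta$, together with Theorem~\ref{deta} and $|\theta|=1$, the restriction of $\lie_U\Omega$ to $\hlie$ equals $\omega$; reading this off gives the conformal identity $\omega(AX,Y)+\omega(X,AY)=-\omega(X,Y)$. In the same way $\lie_V\Omega=0$ restricts to $\omega(BX,Y)+\omega(X,BY)=0$, the vanishing of $N_J$ on pairs $(V,Y)$ gives an intertwining relation, and the Jacobi identity applied to $[[U,V],Y]=c[V,Y]$ gives $[A,B]=cB$. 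Writing $*$ for the $g$-adjoint on $\hlie$, these become
\[
A^{*}=JAJ-\id,\qquad B^{*}=JBJ,\qquad JA-AJ=B+B^{*},\qquad [A,B]=cB.
\]
Since $\operatorname{im}A+\operatorname{im}B=\hlie$ is equivalent to $\ker A^{*}\cap\ker B^{*}=0$, and the first two identities show that $x\in\ker A^{*}\cap\ker B^{*}$ exactly when $Jx\in\ker(A+\id)\cap\ker B$, the claim reduces to $\ker(A+\id)\cap\ker B=0$.

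The heart of the proof, and the step I expect to be the main obstacle, is this last equality, because it is precisely here that the hypothesis $c>-1$ is used. I would argue by contradiction: assume there is $w\neq0$ with $Aw=-w$ and $Bw=0$, and put $u=Jw$, $\kappa=|w|^{2}>0$. Applying $JA-AJ=B+B^{*}$ to $w$ gives $Au=-u-JBu$, from which one computes $A^{*}w=Bu$ and $B^{*}w=JBu$. Substituting $X=w$, $Y=u$ into the conformal identity yields $g(w,Bu)=-\kappa$; applying $[A,B]=cB$ to $u$, rewriting $Au$, and pairing the resulting identity $ABu+BJBu=(c-1)Bu$ with $w$ (via $A^{*}w=Bu$ and $B^{*}w=JBu$) yields $2|Bu|^{2}=(1-c)\kappa$. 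Cauchy--Schwarz applied to $g(w,Bu)=-\kappa$ forces $|Bu|^{2}\geq\kappa$, so $(1-c)\kappa\geq2\kappa$, i.e.\ $c\leq-1$, contradicting $c>-1$. Hence no such $w$ exists and $\operatorname{im}A+\operatorname{im}B=\hlie$, completing the reverse inclusion.

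For the final assertion, a unimodular integrable LCK Lie algebra has $\delta\theta=\trace(\ad_U)=0$, hence $c=\delta\theta+n=n\geq1$, so $c>-1$ and $c\neq0$ and the hypotheses hold. The two points I would be most careful about are that every operator identity must be read as its restriction to the ideal $\hlie$ (so that the $\theta\wedge\eta$ and $d\eta$ terms disappear), and that the final contradiction comes from pitting the degree $-1$ in the conformal identity against the eigenvalue shift encoded in $[A,B]=cB$, which is exactly what localizes the obstruction at $c=-1$.
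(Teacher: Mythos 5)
Your proof is correct, and it takes a genuinely different route from the paper's. The paper also argues by contradiction from a unit vector $X\in[\liealg,\liealg]^\perp\cap U^\perp$, but it then works with an explicit orthonormal basis $X,JX,Y_1,\dots,Y_{2n-2}$ of $\hlie$: it first establishes $[U,JX]=-JX$ and $[V,JX]=0$ through repeated manipulations of $d\Omega=\theta\wedge\Omega$ and $d\eta=c\,\eta\wedge\theta$, then expands $[U,X]$ and $[V,X]$ in that basis and contradicts the Jacobi identity via the inequality $-1-\sum_k g([V,X],Y_k)^2-\sum_k g([U,X],Y_k)^2-c<0$. You instead package the structure equations into the four operator identities $A^*=JAJ-\id$, $B^*=JBJ$, $JA-AJ=B+B^*$, $[A,B]=cB$ for $A=\ad_U|_\hlie$, $B=\ad_V|_\hlie$ (all of which I checked against the conditions the paper later records in Proposition~\ref{cond}), reduce to $\ker(A+\id)\cap\ker B=0$, and close with $2|Bu|^2=(1-c)\kappa$ against Cauchy--Schwarz. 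The underlying mechanism is the same --- your $w$ plays the role of the paper's $JX$, and both arguments become sharp exactly at $c=-1$ --- but your version buys two things: it proves the strictly stronger statement $[U,\hlie]+[V,\hlie]=\hlie$ (you only assume orthogonality to $\operatorname{im}A+\operatorname{im}B$, whereas the paper's $X$ is orthogonal to all of $[\liealg,\liealg]$, including $[\hlie,\hlie]$), and it replaces the basis-dependent bookkeeping with a short adjoint computation. The one place to be careful, which you flag yourself, is that the identities for $A$ and $B$ are only valid as restrictions to the ideal $\hlie$ (so that the $\theta\wedge\eta$ and $d\eta$ terms drop out and $\ff=1$ is used); with that noted, every step checks out, including the reduction of the unimodular case to $c=n$.
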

\begin{proof}
We have $\left[ \liealg,\liealg \right] \subset U^\perp$, since for any
$X$, $Y\in \liealg$
\begin{equation*}
\begin{aligned}
g([X,Y], U) = \theta([X,Y]) = - d\theta (X,Y) = 0.
\end{aligned}
\end{equation*}
Thus we only have to check the inclusion $U^\perp \subset \left[ \liealg, \liealg \right]$.
Observe that $V = [(1/c)U,V] \in \left[ \liealg,\liealg \right]$.
We get  the vector space filtration of  $\liealg$
\begin{align}
\left\langle V \right\rangle \subset \left[ \liealg,\liealg \right] \subset
U^\perp\subset \liealg.
\end{align}
If the inclusion $\left[ \liealg,\liealg \right] \subset U^\perp$ is
proper,
then there exists a non-zero $X \in  \left[ \liealg, \liealg \right]^\perp \cap U^\perp$.
We are going to show that no such $X$ can exist if $c>-1$ and $c\not=0$.

Suppose $X \in \left[ \liealg,\liealg \right]^\perp \cap U^\perp \subset
V^\perp \cap U^\perp$, $X\not=0$.
Without loss of generality we can assume $g(X,X) =1$.

First we show $\left[ U,JX \right] =-JX$. We  start by showing  that $\left[ U, JX
\right]$ is in $\left\langle U,V \right\rangle^\perp$. We will use that
$JX \in \left\langle U,V \right\rangle^\perp$, which holds since $X \in
\left\langle U,V \right\rangle^\perp$ and $\left\langle U,V
\right\rangle^\perp$ is $J$-invariant. We have
\begin{equation*}
\begin{aligned}
g([U,JX],U) & = \theta([U,JX]) = -d\theta (U,JX) =0\\
g([U,JX],V) & = \eta([U,JX]) = - d\eta (U,JX) = - c (\eta \wedge
\theta)(U,JX) =0.
\end{aligned}
\end{equation*}
We also have $[U,JX] \perp X$, since $X \in \left[ \liealg,\liealg
\right]^\perp$. Next we check that $g([U,JX], JX) =-1$. We have
\begin{equation}
\label{gujxjx}
\begin{aligned}
g([U,JX],JX) & = \Omega ([U,JX], X) \\
&= - d\Omega(U,JX,X) - \Omega([JX,X],U) - \Omega([X,U],JX) \\
& = - (\theta\wedge \Omega)(U,JX,X) - g([JX,X],V) + g([X,U],X).
\end{aligned}
\end{equation}
The last term in the above expression vanishes since $X \in \left[ \liealg,
\liealg
\right]^\perp$. Further
\begin{equation*}
\begin{aligned}
g([JX,X],V) = \eta([JX,X]) = - d\eta(JX,X) = -c (\eta \wedge \theta)(JX,X) =0.
\end{aligned}
\end{equation*}
 Thus, returning to the computation started
in~\eqref{gujxjx}, we get
\begin{equation*}
\begin{aligned}
g([U,JX],JX) & = - (\theta \wedge \Omega)(U,JX,X) = - \Omega(JX,X) = -
g(JX,JX) =-1.
\end{aligned}
\end{equation*}
It is left to check that $[U,JX] \in \left\langle U,V, X,JX
\right\rangle$. Let $Y \in \left\langle U,V,X,JX \right\rangle^\perp$.
Then
\begin{equation*}
\begin{aligned}
g([U,JX],Y) & = - \Omega([U,JX],JY) \\& =  d\Omega(U,JX,JY) + \Omega([JX,JY],U) +
\Omega([JY,U],JX)
\\& =  (\theta\wedge \Omega)(U,JX,JY) + g([JX,JY],V) - g([JY,U],X).
\end{aligned}
\end{equation*}
The last term vanishes once again since $X\in \left[ \liealg,\liealg
\right]^\perp$. The second term is zero since
\begin{equation*}
\begin{aligned}
g([JX,JY],V) = \eta([JX,JY]) = - d\eta(JX,JY) = -c (\eta \wedge \theta)(JX,JY) =0.
\end{aligned}
\end{equation*}
For the first term we have
\begin{equation*}
\begin{aligned}
(\theta\wedge \Omega)(U,JX,JY) = \Omega(JX,JY) = - g(JX,Y) =0.
\end{aligned}
\end{equation*}
Therefore $g([U,JX],Y)=0$ and $[U,JX] = -JX$ as claimed.

Next, we show $\left[ V,JX \right] =0$. The strategy is the same as before. We
have $[V,JX] \perp X$, since $X \in \left[ \liealg,\liealg \right]^\perp$.
Further
\begin{equation*}
\begin{aligned}
g([V,JX],U) & = \theta([V,JX]) = -d\theta(V,JX) =0\\
g([V,JX],V) & = \eta([V,JX]) = -d\eta(V,JX) = - c(\eta \wedge \theta)(V,JX) =0.
\end{aligned}
\end{equation*}
Now let $Y\in \left\langle U,V,X \right\rangle^\perp$. Then
\begin{equation*}
\begin{aligned}
g([V,JX],Y) & = - \Omega([V,JX], JY)
\\& = d\Omega(V,JX,JY) + \Omega([JX,JY],V) + \Omega([JY,V],JX)
\\ & = (\theta\wedge \Omega)(V,JX,JY) - g([JX,JY],U) - g([JY,V],X).
\end{aligned}
\end{equation*}
The first term is trivially zero.
The last term vanishes since $X\in \left[ \liealg, \liealg \right]^\perp$.
Finally
\begin{equation*}
\begin{aligned}
g([JX,JY],U) &= \theta([JX,JY]) = -d\theta(JX,JY) =0.
\end{aligned}
\end{equation*}
Hence $[V,JX] =0$.

We are ready to show that
\begin{equation}
\label{inequality}
\begin{aligned}
g(\left[ U,\left[ V,X \right] \right] - \left[ V,\left[ U,X \right] \right] - c
\left[ V,X \right], JX) <0
\end{aligned}
\end{equation}
which will produce contradiction, since by Jacobi identity we have
\begin{equation*}
\begin{aligned}
\left[ U,\left[ V,X \right] \right] - \left[ V,\left[ U,X \right] \right] -
c\left[ V,X \right] =
\left[ \left[ U,V \right], X \right] - c\left[ V,X \right] = \left[ cV,X
\right] - c\left[ V,X \right] = 0.
\end{aligned}
\end{equation*}
Let $X,JX, Y_1,\dots, Y_{2n-2}$ be an orthonormal basis of $\hlie = \left\langle U,V
\right\rangle^\perp$. Since $\hlie$ is an ideal of $\liealg$, we have $[V,X]$, $\left[
U,\left[ V,X \right] \right] \in \hlie$.
To compute $g([U,[V,X]],JX)$, we first consider the basis decomposition of $[V,X]$
\begin{equation*}
\begin{aligned}
{} [V,X] & = g([V,X],X)X + g([V,X],JX) JX  + \sum_{k=1}^{2n-2}
g([V,X],Y_k)Y_k.
\end{aligned}
\end{equation*}
Notice that $g([V,X],X)=0$, since $X \in [\liealg,\liealg]^\perp$. Next
we use integrability of $J$ to compute $g([V,X],JX)$. We get
\begin{equation*}
\begin{aligned}
g([V,X],JX)  & = g (-J[JU,X],X)  = g([U,X] - [JU,JX] + J[U,JX], X)
 \\& = g([U,X],X) - g([JU,JX], X) + g(J[U,JX],X).
\end{aligned}
\end{equation*}
The first two terms vanish since $X \in \left[ \liealg, \liealg \right]^\perp$.
Continuing the computation and using $\left[ U,JX \right] = -JX$, we get
\begin{equation}
\label{gvxjx}
\begin{aligned}
g([V,X],JX)  & = g(J[U,JX],X) =  g(J(-JX), X) =  g(X,X) = 1.
\end{aligned}
\end{equation}
Therefore
\begin{equation*}
\begin{aligned}
{} [V,X] & = JX + \sum_{k=1}^{2n-2}
g([V,X],Y_k)Y_k
\end{aligned}
\end{equation*}
Applying $g([U,-],JX)$ to the above equation and using  again that $[U,JX]=-JX$, we obtain
\begin{equation}
\label{guvxjx}
\begin{aligned}
g([U,[V,X]],JX)  = -1 +
\sum_{k=1}^{2n-2}  g([V,X],Y_k)g([U,Y_k],JX). \end{aligned}
\end{equation}
Now we will show $g([U,Z],JX) = - g([V,JX],Z)$ for every $Z\in \left\langle
U,V,X,JX
\right\rangle^\perp$ of length one.
Using integrability of $J$, we get
\begin{equation*}
\begin{aligned}
g([U,Z],JX) & = g(J[JV,Z], X) = g(-[V,Z] + [JV,JZ] - J[V,JZ],X)
\\ & = - g([V,Z],X) + g([JV,JZ],X) + g([V,JZ],JX).
\end{aligned}
\end{equation*}
The first two terms above vanish since $X \in [\liealg,\liealg]^\perp$. Hence
\begin{equation*}
\begin{aligned}
g([U,Z],JX) & = g([V,JZ],JX) = \Omega ([V,JZ],X) \\&  = - d\Omega(V,JZ,X) -
\Omega([JZ,X],V) - \Omega([X,V],JZ)
\\ & = - (\theta \wedge \Omega)(V,JZ,X) + g([JZ,X],U) + g([X,V],Z) \\&  = -
g([V,X],Z),
\end{aligned}
\end{equation*}
where in the last step we used $[JZ,X] \in \liealg^{(1)} \subset U^\perp$.
Using the above formula with $Z=Y_k$, we can write
\eqref{guvxjx} as
\begin{equation}
\label{guvxjx2}
\begin{aligned}
g([U,[V,X]],JX)  = -1 -
\sum_{k=1}^{2n-2}  g([V,X],Y_k)^2.\end{aligned}
\end{equation}
Next we compute $g([V,[U,X]],JX)$. We start by writing $[U,X]$ in the chosen
orthonormal basis of $\hlie$
\begin{equation*}
\begin{aligned}
{} [U,X] & = g([U,X],X)X + g([U,X],JX) JX  + \sum_{k=1}^{2n-2}
g([U,X],Y_k)Y_k.
\end{aligned}
\end{equation*}
Notice that $g([U,X],X) =0$ since $X \in \left[ \liealg,\liealg \right]^\perp$.
As $J$ is integrable, we get
\begin{equation*}
\begin{aligned}
g([U,X],JX) & = g (J[JV,X],X) = g(-[V,X] + [JV,JX] - J[V,JX],X)
\\ & = - g([V,X],X) + g([JV,JX],X)  + g([V,JX],JX) =0,
\end{aligned}
\end{equation*}
 where the first two terms vanish because $X \in [\liealg, \liealg]^\perp$ and the
last term is zero because we showed before that $[V,JX] =0$.
Hence
\begin{equation*}
\begin{aligned}
{} [U,X] & =  \sum_{k=1}^{2n-2}
g([U,X],Y_k)Y_k.
\end{aligned}
\end{equation*}
Applying $g([V,-],JX)$ to the last expression, we obtain
\begin{equation}
\begin{aligned}
{} g([V,[U,X]],JX) & =  \sum_{k=1}^{2n-2}
g([U,X],Y_k)g([V,Y_k],JX).
\end{aligned}
\label{gvuxjx}
\end{equation}
Next, we check that for every $Z \in \left\langle U,V,X,JX
\right\rangle^\perp$,  $g([V,Z],JX)=g([U,X],Z)$ holds.
Applying the integrability of $J$, we have
\begin{equation*}
\begin{aligned}
g([V,Z],JX) & = g(-J[JU,Z],X) = g([U,Z] - [JU,JZ] + J[U,JZ],X)
\\& = g([U,Z],X) - g([JU,JZ],X) - g([U,JZ],JX).
\end{aligned}
\end{equation*}
The first two terms above vanish as $X \in \left[ \liealg, \liealg
\right]^\perp$. Thus we can continue computation as follows
\begin{equation*}
\begin{aligned}
g([V,Z],JX) & =  - g([U,JZ],JX)
= - \Omega([U,JZ],X) \\& = d\Omega(U,JZ,X) + \Omega([JZ,X], U) + \Omega([X,U], JZ)
\\ & = (\theta \wedge \Omega) (U,JZ,X) + g([JZ,X],V) - g([X,U],Z)
\\ & = \Omega(JZ,X) + \eta([JZ,X]) + g([U,X],Z)
\\ & = 0 - d\eta(JZ,X) +  g([U,X],Z) = - c (\eta \wedge \theta)(JZ,X) +  g([U,X],Z)
\\ & = g([U,X],Z).
\end{aligned}
\end{equation*}
With $g([V,Z],JX) = g([U,X],Z)$, the equation \eqref{gvuxjx} becomes
\begin{equation}
\label{gvuxjx2}
\begin{aligned}
{} g([V,[U,X]],JX) & =  \sum_{k=1}^{2n-2}
g([U,X],Y_k)^2.
\end{aligned}
\end{equation}
Now we substitute~\eqref{guvxjx2},\eqref{gvuxjx2}, and \eqref{gvxjx} into the
left side of~\eqref{inequality} and obtain
\begin{equation*}
\begin{aligned}
-1  & - \sum_{k=1}^{2n-2}   g([V,X],Y_k)^2
  - \sum_{k=1}^{2n-2}
g([U,X],Y_k)^2
 - c < -1 + 1 = 0,
\end{aligned}
\end{equation*}
since $c>-1$.
\end{proof}
Besides of proving that every unimodular Kähler Lie algebra is meta-abelian,
Hano also showed in~\cite{hano} that every nilpotent unimodular Kähler Lie
algebra is abelian.
Using this result we can improve conclusion of Proposition~\ref{kaehlerideal}
under the additional assumption that the algebra in question is unimodular.

\begin{theorem}\label{abelianideal}
Let $\liealg$ be an unimodular integrable LCK Lie algebra. Then $\hlie =
\left\langle U,V \right\rangle^\perp$ is a Kähler abelian ideal.
\end{theorem}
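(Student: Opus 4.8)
The plan is to combine the characterization of the derived algebra from Theorem~\ref{uperp} with the classical fact that the derived algebra of a solvable Lie algebra is nilpotent, and then to finish by Hano's theorem on nilpotent unimodular Kähler Lie algebras. Since Proposition~\ref{kaehlerideal} already equips $\hlie = \langle U,V\rangle^\perp$ with a Kähler structure, the only thing left to establish is that $\hlie$ is abelian.

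First I would record that by Theorem~\ref{unimodularsolvable} the algebra $\liealg$ is solvable. Working over $\R$, a standard consequence of Lie's theorem (applied to the complexification $\liealg\otimes\C$) is that the derived algebra $\liealg^{(1)} = [\liealg,\liealg]$ is nilpotent. I would recall the argument in one line: under $\ad$ the solvable algebra $\liealg\otimes\C$ is simultaneously triangularizable, so every element of $[\liealg,\liealg]$ acts on $\liealg\otimes\C$ by a nilpotent endomorphism, and Engel's theorem then yields nilpotency of $[\liealg,\liealg]$.

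Next I would invoke Theorem~\ref{uperp}, which for a unimodular integrable LCK Lie algebra gives $[\liealg,\liealg] = U^\perp$. Since $U^\perp = \langle V\rangle \oplus \hlie$ as an orthogonal direct sum, we obtain the inclusion $\hlie \subset \liealg^{(1)}$. Being a Lie subalgebra of the nilpotent algebra $\liealg^{(1)}$, the ideal $\hlie$ is itself nilpotent. Finally, a nilpotent Lie algebra is automatically unimodular, because $\ad_Y$ is nilpotent and hence trace-free for every $Y\in\hlie$; thus $\hlie$ is a nilpotent unimodular Kähler Lie algebra, and Hano's theorem quoted above forces $\hlie$ to be abelian.

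The hard part is securing the nilpotency of $\hlie$, and the key point is that Theorem~\ref{uperp} places $\hlie$ inside the derived algebra $\liealg^{(1)}$, which is nilpotent precisely because $\liealg$ is solvable. Without the identification $[\liealg,\liealg]=U^\perp$ one would only know, via Hano's first theorem, that $\hlie$ is meta-abelian, i.e. $\hlie^{(2)}=0$, which is insufficient to conclude that $\hlie$ is abelian; it is the passage from meta-abelian to nilpotent, supplied by the inclusion into $\liealg^{(1)}$, that makes Hano's second theorem applicable.
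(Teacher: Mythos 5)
Your proposal is correct and follows essentially the same route as the paper: solvability from Theorem~\ref{unimodularsolvable}, the identification $[\liealg,\liealg]=U^\perp\supset\hlie$ from Theorem~\ref{uperp}, nilpotency of the derived algebra of a solvable Lie algebra, and Hano's theorem on nilpotent unimodular K\"ahler Lie algebras. The paper phrases the middle step via the nilradical ($U^\perp$ is the nilradical and contains $\hlie$), but the underlying mechanism is identical to yours.
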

\begin{proof}
By Theorems~\ref{unimodularsolvable}--~\ref{uperp}, the nilradical of $\liealg$ is $U^\perp$ and contains
$\hlie$. Hence $\hlie$ is a nilpotent ideal. By Proposition~\ref{kaehlerideal}, $\hlie$ is a Kähler ideal. Hence by
Hano's theorem $\hlie$ is abelian.
\end{proof}

\section{Construction of integrable LCK Lie algebras}
In view of Corollary~\ref{semidirect}, it is natural to ask which semi-direct
products of a Kähler Lie algebra with a two dimensional lie algebra
admit an integrable LCK structure.

For this we fix a constant $c\in \R$ and consider the Lie algebra $\aaac$ with
a basis $U$, $V$ and the Lie bracket $[U,V] = cV$. Notice, that the algebras
with $c\not=0$ are mutually isomorphic and this algebra is usually denoted by
$\aaa$. We equip $\aaac$ with almost Hermitian structure by $g(U,U) =
g(V,V) =1$ and $JU = V$. It is easy to check that this structure is Kähler.

Consider a Kähler Lie algebra $\hlie$  with two derivations $u$, $v$ such that
$[u,v] = cv$. Then we define an action of $\aaac$ on $\hlie$ by $U \mapsto u$,
$V\mapsto v$ and denote the corresponding semi-direct product by
$\aaac\ltimes_{u,v} \hlie$. We endow $\aaac\ltimes_{u,v} \hlie$ with the
product almost Hermitian structure. Notice that since $\aaac\ltimes_{u,v}
\hlie$ is not a direct product of Lie algebras, the resulting fundamental $2$-form on
$\aaac\ltimes_{u,v} \hlie$, in general, is not closed. Also, the integrability
of the almost complex structure on $\aaac\ltimes_{u,v} \hlie$ does not follow
from the integrability on the factors. It is not difficult to verify the following
claim.
\begin{proposition}
Let $\liealg^{2n+2}$ be an integrable LCK Lie algebra, $c = \delta \theta + n$,
$\hlie = \left\langle U,V \right\rangle^\perp$. Denote $\ad_U\!|_\hlie$ and
$\ad_V\!|_\hlie$ by $u$ and $v$, respectively. Then $\liealg$ and
$\aaac\ltimes_{u,v} \hlie$ are isomorphic as almost Hermitian Lie algebras.
\end{proposition}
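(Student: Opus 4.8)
The plan is to show that the identity map on the common underlying vector space $\liealg = \langle U,V\rangle \oplus \hlie$ is an isomorphism of almost Hermitian Lie algebras onto $\aaac \ltimes_{u,v}\hlie$. Since nearly all of the structural work has already been carried out in the preceding results, the proof amounts to assembling them and checking compatibility of the metric and the complex structure; I do not expect a serious obstacle.

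First I would verify that the target $\aaac \ltimes_{u,v}\hlie$ is genuinely defined by the given data. Because $\hlie$ is an ideal by Proposition~\ref{kaehlerideal}, the restrictions $u = \ad_U|_\hlie$ and $v = \ad_V|_\hlie$ map $\hlie$ to itself and are derivations of $\hlie$. To see that they satisfy the relation $[u,v] = cv$ required by the construction of $\aaac$, I would invoke the Jacobi identity in $\liealg$ together with $[U,V] = cV$: for $Y \in \hlie$,
\[
[u,v]Y = [U,[V,Y]] - [V,[U,Y]] = [[U,V],Y] = [cV,Y] = c\,v(Y).
\]
Here the identity $[U,V] = cV$ with $c = \delta\theta + n$ comes from Theorem~\ref{deta}, using that $|\theta| = 1$ forces $\grad\ff = 0$. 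Thus $\tilde\rho\colon \aaac \to \End(\hlie)$, $U \mapsto u$, $V \mapsto v$, is a Lie algebra homomorphism whose image consists of derivations, and the semidirect product is well defined.

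Next I would match the brackets. The subalgebra $\langle U,V\rangle \subset \liealg$, carrying the induced bracket $[U,V] = cV$, is precisely $\aaac$. For $a_i \in \langle U,V\rangle$ and $b_i \in \hlie$, expanding $[a_1 + b_1, a_2 + b_2]$ in $\liealg$ and using that $\hlie$ is an ideal shows that the mixed terms $[a_i, b_j]$ are exactly $\rho(a_i \otimes b_j)$, while $[a_1,a_2]$ and $[b_1,b_2]$ reproduce the $\aaac$- and $\hlie$-brackets; this is the defining formula of the semidirect product. This step is essentially the content of Corollary~\ref{semidirect}, and it shows that the identity map is a Lie algebra isomorphism.

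Finally I would check that $g$ and $J$ coincide with the product structures. Since $\hlie = \langle U,V\rangle^\perp$ by definition, $g$ is block-diagonal; on $\langle U,V\rangle$ one has $g(U,U) = |\theta|^2 = 1$, $g(V,V) = |\eta|^2 = |\theta|^2 = 1$, and $g(U,V) = g(U,JU) = 0$ by skew-symmetry of $J$, recovering exactly the metric on $\aaac$, while $g|_\hlie$ is the Kähler metric. For the complex structure, $JU = V$ and $JV = -U$ give $J$ on $\aaac$, and $J\hlie \subset \hlie$ with $J|_\hlie$ the Kähler complex structure by Proposition~\ref{kaehlerideal}. Hence the identity map also intertwines $g$ and $J$, so it is an isomorphism of almost Hermitian Lie algebras. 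The only points that genuinely need care are the automatic verification of $[u,v] = cv$ and the identity $V = JU$, which together pin down the homomorphism property and the metric on $\langle U,V\rangle$.
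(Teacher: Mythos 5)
Your proof is correct and follows exactly the route the paper intends: the paper states this proposition without proof (``It is not difficult to verify the following claim''), and your verification via the identity map, using Proposition~\ref{kaehlerideal} for the ideal/Kähler structure of $\hlie$, the Jacobi identity for $[u,v]=cv$, and Theorem~\ref{deta} with $|\theta|=1$ for $[U,V]=cV$, is precisely the routine check being omitted. No gaps.
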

This shows that every integrable LCK Lie algebra can be constructed as a semi-direct product
$\aaac\ltimes_{u,v} \hlie$. In the next proposition we identify them among all
almost Hermitian Lie algebras of the form $\aaac\ltimes_{u,v} \hlie$.
\begin{proposition}
\label{cond}
Let $\hlie$ be a Kähler Lie algebra, $c\in \R$, and $u$, $v$ derivations on
$\hlie$ such that $[u, v] = cv$.
\begin{enumerate}[1.]
\item The almost complex structure $J$ on $\aaac\ltimes_{u, v}\hlie $ is integrable
if and only if
\begin{equation}
\begin{aligned}
\left[ v + Ju, J \right] =0.
\end{aligned}
\label{vjuj}
\end{equation}
\item The fundamental $2$-form $\Omega$  on $\aaac\ltimes_{u,v}\hlie $
is LCS with the Lee form $\theta=U^\flat$ if and only if
\begin{equation}
\label{jujjuvjjv}
\begin{aligned}
J +u^* J + Ju =0,\quad v^*J + Jv =0,
\end{aligned}
\end{equation}
 where $x^*$
denotes the adjoint operator of $x \in \End_\R(\hlie)$.
\item The almost Hermitian structure on $\aaac\ltimes_{u,v} \hlie$ is
an integrable LCK structure with Lee form $\theta$ if and only if both conditions~\eqref{vjuj} and~\eqref{jujjuvjjv}
hold.
\end{enumerate}
\end{proposition}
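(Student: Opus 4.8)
The plan is to prove all three parts by evaluating the Nijenhuis tensor $N_J$ and the Chevalley--Eilenberg differential $d\Omega$ on the adapted basis of $\aaac\ltimes_{u,v}\hlie$ consisting of $U$, $V$ together with an orthonormal basis of $\hlie$, and then reading off the resulting operator identities on $\hlie$. Throughout I will use the explicit brackets of the semi-direct product, namely $[U,V]=cV$, $[U,Y]=u(Y)$, $[V,Y]=v(Y)$ and $[Y_1,Y_2]=[Y_1,Y_2]_\hlie$ for $Y,Y_1,Y_2\in\hlie$, together with the product almost complex structure $JU=V$, $JV=-U$ and $J|_\hlie$ the Kähler structure of $\hlie$. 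Since $N_J$ and $d\Omega$ are tensorial and alternating, it suffices to test them on such basis tuples.

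For part~1, I would compute $N_J$ on the three types of pairs. On $(U,V)$ the identity $N_J(U,V)=0$ follows directly from $[U,V]=cV$ and $J^2=-\id$. On a pair $(Y_1,Y_2)$ from $\hlie$ the bracket stays in $\hlie$ and $J$ restricts to the integrable Kähler structure, so $N_J(Y_1,Y_2)=0$ because $\hlie$ is Hermitian. The only surviving tuples are $(U,Y)$ and $(V,Y)$; expanding $N_J(U,Y)$ with the above brackets produces an endomorphism of $\hlie$ of the shape $vJ-Jv\pm(u+JuJ)$, and the same expansion for $N_J(V,Y)$ gives an identity that is obtained from the first by left multiplication by $J$, hence is equivalent to it. Rearranging the single remaining identity gives exactly the commutator condition \eqref{vjuj}. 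The only delicate point here is to verify that the $(V,Y)$-equation is really $J$-conjugate to the $(U,Y)$-equation, so that one family of equations, not two, is produced.

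For part~2, I first observe that $\Omega$ is automatically nondegenerate (it comes from the positive definite $g$ and the invertible $J$) and that $\theta=U^\flat$ is automatically closed, since $d\theta(A,B)=-\theta([A,B])$ vanishes on every basis pair because all brackets land in $\langle V\rangle\oplus\hlie\subset\ker\theta$. It therefore remains to test $d\Omega=\theta\wedge\Omega$ on basis triples. On $(Y_1,Y_2,Y_3)$ both sides vanish, the left because $d\Omega$ restricts to $d\omega=0$ on the Kähler ideal $\hlie$ and the right because $\theta|_\hlie=0$; on $(U,V,Y)$ both sides vanish by a short computation. The content is in the triples $(U,Y_1,Y_2)$ and $(V,Y_1,Y_2)$: expanding $d\Omega$ and $\theta\wedge\Omega$ and then converting the resulting expressions $\omega(u(Y_1),Y_2)$ and $\omega(v(Y_1),Y_2)$ into operator form by means of the $g$-adjoints $u^*$, $v^*$ yields precisely the two equations $J+u^*J+Ju=0$ and $v^*J+Jv=0$ of \eqref{jujjuvjjv}. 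The main technical obstacle of the whole proof lies exactly in this conversion: one must pass from the alternating-form identities to endomorphism identities, and this is where the adjoints $u^*$, $v^*$ (rather than $u$, $v$ themselves) enter.

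Finally, for part~3 I would assemble the previous two parts. By construction $g$ is positive definite, $\Omega=g\circ(\id\otimes J)$, the form $\theta=U^\flat$ is closed, and $\eta=-i_J\theta=V^\flat$, so the structure is an LCK structure with Lee form $\theta$ precisely when $J$ is integrable and $d\Omega=\theta\wedge\Omega$, that is, by parts~1 and~2, precisely when both \eqref{vjuj} and \eqref{jujjuvjjv} hold. It then remains to note that such an LCK structure is automatically \emph{integrable} in our sense: the distribution $\ker\eta=V^\perp=\langle U\rangle\oplus\hlie$ is a subalgebra of $\aaac\ltimes_{u,v}\hlie$, since $[U,Y]=u(Y)\in\hlie$ and $[Y_1,Y_2]\in\hlie$, so $[V^\perp,V^\perp]\subset\hlie\subset V^\perp$ and hence $\eta\wedge d\eta=0$. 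This gives the stated equivalence.
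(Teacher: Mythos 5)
Your proposal follows essentially the same route as the paper: reduce the Nijenhuis tensor to the pairs $(U,Y)$ (the paper invokes $N_J(JX,Y)=-JN_J(X,Y)$ to see that the $(V,Y)$ equations are redundant, which is exactly your ``$J$-conjugate'' observation), and test $d\Omega-\theta\wedge\Omega$ on the basis triples $(U,Y_1,Y_2)$ and $(V,Y_1,Y_2)$, converting the resulting form identities into operator identities via the $g$-adjoints $u^*$, $v^*$. Your explicit verification in part~3 that $\ker\eta=\langle U\rangle\oplus\hlie$ is a subalgebra, hence $\eta\wedge d\eta=0$, is a small addition that the paper leaves implicit.
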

\begin{proof}
The almost complex structure $J$ on $\aaac\ltimes_{u,v} \hlie$ is integrable if
and only if $N_J(X,Y) =0$ for every $X \in \aaac$ and $Y \in \hlie$. Indeed, if
both $X$ and $Y$ belong to the same factor then the vanishing of $N_J$ on the pair
$X$, $Y$ follows from the integrability of almost complex structure on this factor.
Next, since for every almost complex structure $N_J(JX,Y) = -JN_J(X,Y)$ and
$N_J$ is bilinear, $J$ is integrable if and only if $N_J(U,Y) =0$ for all
$Y\in \hlie$. Now
\begin{equation*}
\begin{aligned}
N_J(U,Y) & = - [U,Y] + [JU, JY] - J[JU, Y] - J[U,JY]
\\ & = - [U,Y] + [V, JY] - J[V, Y] - J[U,JY]
\\ & =\phantom{-}  uJ^2Y + vJY - JvY - JuJY
\\ & = \phantom{-}  [v + uJ, J]Y.
\end{aligned}
\end{equation*}
Hence $J$ is integrable if and only if $[v + uJ, J] =0$.

Now we compute the $3$-form $d\Omega - \theta \wedge \Omega$  on
$\aaac\ltimes_{u,v} \hlie$. We write $\omega$ for the Kähler $2$-form on
$\hlie$. Since $d\omega=0$, we get that $d\Omega$ evaluated on triples
$X$, $Y$, $Z\in \hlie$ vanishes. It is also clear that $\theta\wedge \Omega$
vanishes on $\bigwedge^3\hlie$.
For every $Z\in \hlie$, we have
\begin{equation*}
\begin{aligned}
(d\Omega - \theta \wedge \Omega)(U,V,Z)&= -\Omega(cV,Z) - \Omega([V,Z],U)
- \Omega([Z,U],V)  - \Omega(V,Z)  =0,
\end{aligned}
\end{equation*}
as $[U,V]=cV$ and $\hlie$ is an ideal orthogonal to $\left\langle U,V \right\rangle$ by construction.
 Finally, let $X$, $Y\in \hlie$. Then
\begin{equation*}
\begin{aligned}
(d\Omega - \theta \wedge \Omega)(U,X,Y) &= - \Omega ([U,X] , Y)  -
\Omega([X,Y],U) - \Omega([Y,U],X) - \Omega(X,Y)
\\ & =  - g(uX, JY) +0+ g(uY, JX) - g(X,JY)
\\ & = g(JuX + u^*JX + JX, Y),\\[2ex]
(d\Omega - \theta \wedge \Omega)(V,X,Y)& = - \Omega ([V,X], Y) - \Omega ([X,Y],V)  - \Omega ([Y,V],X)
\\ &  = - g(vX, JY) + g(vY, JX)
= g(JvX + v^*JX, Y).
\end{aligned}
\end{equation*}
Since $g$ is non-degenerate on $\hlie$, we get that $d\Omega - \theta \wedge
\Omega$ vanishes if and only if $u^* J + Ju + J=0$ and $v^*J + Jv =0$.

The last claim of the proposition is a consequence of the first two.
\end{proof}
Denote by $H_{n,c}$ the class of triples $(\hlie, u, v)$ where $\hlie$ is a
Kähler
Lie algebra of dimension $2n$, and $u$, $v$ are derivations on $\hlie$ such that
\begin{equation}
\label{conditions}
\begin{aligned}
{}[u,v] = cv,\ [v + uJ, J]=0,\ v^*J + Jv =0,\ J + u^*J + Ju =0.
\end{aligned}
\end{equation}
We say that $(\hlie_1, u_1,v_1)$ and $(\hlie_2,u_2,v_2) \in H_{n,c}$ are
isomorphic if there is an isomorphism $\phi \colon \hlie_1 \to \hlie_2$ of
Kähler Lie algebras, such that $u_2 \circ \phi = \phi \circ u_1$ and $v_2 \circ
\phi = \phi \circ v_1$.
It is routine to check the following result.
\begin{proposition}
Let $(\hlie_1, u_1,v_1)$ and $(\hlie_2,u_2,v_2) \in H_{n,c}$.
Then the LCK Lie algebras $\aaac\ltimes_{u_1,v_1} \hlie_1$ and
$\aaac\ltimes_{u_2,v_2} \hlie_2$ are isomorphic if and only if $(\hlie_1,u_1,v_1)$
and $(\hlie_2,u_2,v_2)$ are isomorphic.
\end{proposition}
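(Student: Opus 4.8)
The plan is to prove the two implications separately, the whole point being that the Lee and anti-Lee vectors are canonical invariants of an LCK Lie algebra, so any isomorphism must respect the decomposition $\liealg = \langle U,V\rangle \oplus \hlie$. I take ``isomorphism of LCK Lie algebras'' to mean a Lie algebra isomorphism that is simultaneously a linear isometry and commutes with $J$ (equivalently, an isomorphism of almost Hermitian Lie algebras, as in the Proposition preceding this statement). For the easy implication, suppose $\phi\colon\hlie_1\to\hlie_2$ is a K\"ahler Lie algebra isomorphism with $u_2\circ\phi=\phi\circ u_1$ and $v_2\circ\phi=\phi\circ v_1$. I would extend it to $\Phi\colon\aaac\ltimes_{u_1,v_1}\hlie_1\to\aaac\ltimes_{u_2,v_2}\hlie_2$ by setting $\Phi(U)=U$, $\Phi(V)=V$, and $\Phi|_{\hlie_1}=\phi$, and then verify that $\Phi$ is a homomorphism by testing it on the three bracket types: on $\langle U,V\rangle$ it is the identity and preserves $[U,V]=cV$; on the mixed brackets the required identities $\Phi[U,Y]=[U,\phi Y]$ and $\Phi[V,Y]=[V,\phi Y]$ are exactly the intertwining relations; and on $\hlie_1\wedge\hlie_1$ it reduces to $\phi$ being a homomorphism. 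Since $\phi$ is a K\"ahler isometry and the two factors are orthogonal and $J$-invariant, $\Phi$ is automatically an isometry commuting with $J$, hence an LCK isomorphism.

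For the converse, let $\Phi\colon\aaac\ltimes_{u_1,v_1}\hlie_1\to\aaac\ltimes_{u_2,v_2}\hlie_2$ be an isomorphism of LCK Lie algebras. The crucial step is to observe that the anti-Lee form $\eta=\frac1n\delta\Omega$ is constructed canonically from the metric, the complex structure, and the Lie bracket (through the codifferential), and $\theta=i_J\eta$; since $\Phi$ is a Lie algebra isometry commuting with $J$, it intertwines $\delta_1$ with $\delta_2$ and therefore carries $\eta_1$ to $\eta_2$ and $\theta_1$ to $\theta_2$. Passing to metric duals gives $\Phi(U)=U$ and $\Phi(V)=V$. As $\Phi$ is an isometry preserving $\langle U,V\rangle$, it sends $\hlie_1=\langle U,V\rangle^\perp$ onto $\hlie_2=\langle U,V\rangle^\perp$, so $\phi:=\Phi|_{\hlie_1}$ is a well-defined linear isomorphism.

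It then remains to check that $\phi$ realizes the isomorphism of triples. Because each $\hlie_i$ is a $J$-invariant K\"ahler ideal (Proposition~\ref{kaehlerideal}), $\phi$ is the restriction of an isometry commuting with $J$ and of a Lie algebra isomorphism, hence a K\"ahler Lie algebra isomorphism. Finally, using $u_i=\ad_U|_{\hlie_i}$ and $v_i=\ad_V|_{\hlie_i}$ from Corollary~\ref{semidirect} together with $\Phi(U)=U$, $\Phi(V)=V$, for $Y\in\hlie_1$ one computes $\phi(u_1 Y)=\Phi[U,Y]=[U,\phi Y]=u_2(\phi Y)$, and similarly $\phi(v_1 Y)=v_2(\phi Y)$, which yields $u_2\circ\phi=\phi\circ u_1$ and $v_2\circ\phi=\phi\circ v_1$. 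The only delicate point in the whole argument is the forced equality $\Phi(U)=U$, $\Phi(V)=V$; every other assertion is a routine restriction argument once that invariance is in hand.
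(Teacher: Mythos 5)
The paper gives no proof of this proposition (it is dismissed with ``It is routine to check the following result''), and your argument supplies exactly the routine verification intended: the forward direction by extending $\phi$ by the identity on $\langle U,V\rangle$, and the converse by observing that $\eta=\frac1n\delta\Omega$ (hence $U$, $V$ and the splitting $\langle U,V\rangle\oplus\hlie$) is canonically attached to the almost Hermitian Lie algebra structure, so any LCK isomorphism restricts to an isomorphism of triples. Your proof is correct and is the natural one; the only point deserving the emphasis you give it is indeed the forced equalities $\Phi(U)=U$, $\Phi(V)=V$.
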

Denote by $A_{n,c}$ the subclass of $H_{n,c}$ consisting of triples $(\hlie,
u,v)$ with abelian $\hlie$.
Next we describe unimodular integrable LCK Lie algebras in terms of triples in
$H_{n,c}$.
\begin{theorem}\label{charc}
Let $(\hlie,u,v)\in A_{n,n}$. Then the Lie algebra $\aaan\ltimes_{u, v} \hlie$ is  unimodular. Every unimodular integrable LCK Lie algebra is isomorphic to
$\aaan\ltimes_{u, v} \hlie$ for some $(\hlie, u, v) \in A_{n,n}$.
\end{theorem}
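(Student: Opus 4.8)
The plan is to treat the two assertions of Theorem~\ref{charc} separately. For the first (that $\aaan\ltimes_{u,v}\hlie$ is unimodular whenever $(\hlie,u,v)\in A_{n,n}$), I would verify $\trace(\ad_X)=0$ directly on a basis adapted to the splitting $\langle U\rangle\oplus\langle V\rangle\oplus\hlie$. Since $X\mapsto\trace(\ad_X)$ is linear, it suffices to check the three cases $X=U$, $X=V$, and $X=Y$ with $Y\in\hlie$. For the converse, the strategy is purely to assemble the structural results already established: the semidirect decomposition, the characterization of integrable LCK structures on such products, and the unimodular refinements.

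For the unimodularity computation, the key observations are bracket-by-bracket. Because $\hlie$ is an abelian ideal, $\ad_Y$ for $Y\in\hlie$ sends $U\mapsto -u(Y)$ and $V\mapsto -v(Y)$, both landing in $\hlie$, and annihilates $\hlie$; hence all its diagonal entries vanish and $\trace(\ad_Y)=0$. Next, $\ad_V$ sends $U$ to $-nV$, annihilates $V$, and restricts to $v$ on $\hlie$, so $\trace(\ad_V)=\trace(v)$; while $\ad_U$ annihilates $U$, sends $V$ to $nV$, and restricts to $u$ on $\hlie$, so $\trace(\ad_U)=n+\trace(u)$. Thus unimodularity reduces to the two scalar identities $\trace(v)=0$ and $\trace(u)=-n$. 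The first is immediate from $[u,v]=nv$ (here $c=n$): taking traces gives $0=\trace([u,v])=n\,\trace(v)$, so $\trace(v)=0$.

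The only genuinely computational step is extracting $\trace(u)=-n$ from the LCS condition $J+u^*J+Ju=0$. Here I would right-multiply this identity by $J$ and use $J^2=-\id$ to get $JuJ=\id+u^*$; taking traces and using $\trace(JuJ)=\trace(uJ^2)=-\trace(u)$, $\trace(u^*)=\trace(u)$, and $\trace(\id)=\dim\hlie=2n$ yields $-\trace(u)=2n+\trace(u)$, whence $\trace(u)=-n$. I would flag that tracing the conditions of \eqref{conditions} \emph{directly} is unhelpful: since $J$ is $g$-orthogonal one has $J^*=-J$, and this makes several of the identities collapse to tautologies. Multiplying by $J$ before tracing is precisely what converts the condition into a nontrivial constraint on $\trace(u)$; this is the point I expect to be the main obstacle, or at least the one requiring the right manipulation.

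For the converse, let $\liealg$ be a unimodular integrable LCK Lie algebra of dimension $2n+2$, and set $\hlie=\langle U,V\rangle^\perp$, $u=\ad_U|_\hlie$, $v=\ad_V|_\hlie$. By the remark following \eqref{uvnv} we have $\delta\theta=\trace(\ad_U)=0$, so the constant $c=\delta\theta+n$ equals $n$. The unlabelled proposition immediately preceding Proposition~\ref{cond} identifies $\liealg$ with $\aaan\ltimes_{u,v}\hlie$ as almost Hermitian Lie algebras, and Proposition~\ref{cond}(3) guarantees that $(\hlie,u,v)$ satisfies the full list \eqref{conditions} with $c=n$; together with the fact that $\hlie$ is Kähler (Proposition~\ref{kaehlerideal}), this places the triple in $H_{n,n}$. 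Finally, Theorem~\ref{abelianideal} shows that $\hlie$ is abelian, so in fact $(\hlie,u,v)\in A_{n,n}$, completing the proof. I expect no obstacle here beyond correctly invoking the earlier results; the entire mathematical content of the theorem is concentrated in the trace computation of the first part.
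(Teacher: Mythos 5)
Your proposal is correct and follows essentially the same route as the paper: the same bracket-by-bracket trace computation for unimodularity (including the key step of multiplying $J+u^*J+Ju=0$ by $J$ before tracing — the paper multiplies by $-J$ on the left, you by $J$ on the right, which is equivalent), and the same assembly of Corollary~\ref{semidirect}, Proposition~\ref{cond}, and Theorem~\ref{abelianideal} for the converse. No gaps.
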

\begin{proof}
The second claim of the theorem is a direct consequence of Corollary~\ref{semidirect},
Theorem~\ref{abelianideal}, and Proposition~\ref{cond}.

 Now, we have to show that $\liealg = \mathfrak{r}_{2,n}\ltimes_{u, v} \hlie$ is
unimodular if $(\hlie, u, v) \in A_{n,n}$.
Since $\hlie$ is an abelian ideal we get $\trace(\ad_x) = \trace(\ad_x\!|_\hlie)=0$ for every $x \in \hlie$.
Next $\trace(\ad_U) = n + \trace(u)$ and $\trace(\ad_V) = \trace(v) $, since
$[U,V]=nV$ and $u = \ad_U\!|_\hlie$, $v = \ad_V\!|_\hlie$.
We have $\trace(nv) = \trace([u, v]) =0$, hence also $\trace(v) =0$.
Now, multiply $J + u^*J + Ju=0$ with $(-J)$ from the left. We get $\id - Ju^*J + u=0$.
Thus
\begin{equation*}
\begin{aligned}
\trace(u) = -2n + \trace(Ju^*J) = -2n + \trace(J^2u^*) = -2n - \trace(u^*) = -2n
- \trace(u).
\end{aligned}
\end{equation*}
Hence $\trace(u) = -n$ and $\trace(\ad_U\!|_\hlie) =n -n =0 $. This shows that
$\mathfrak{r}_{2,n} \ltimes_{u, v} \hlie$ is unimodular.
\end{proof}
\begin{remark}
There is a one-to-one correspondence between isoclasses in $A_{n,c}$ and
$A_{n,1}$ for any $c\not=0$ given by
\begin{equation}
\label{correspondence}
\begin{aligned}
A_{n,c} & \to A_{n,1} \\
(\hlie, u,v) & \mapsto \left(\hlie, \frac{1}{c} u + \frac{1-c}{2c}\, \id,
\frac{1}{c} v\right).
\end{aligned}
\end{equation}
Now, we can define direct sums on $A := \coprod_{n\ge 0} A_{n,1}$ and define the
notion of indecomposable elements. Collecting all the previous results we see
that the classification of unimodular integrable LCK Lie algebras up to LCK
Lie algebra isomorphism is reduced to the classification of indecomposable
triples in $A$. We will treat this problem in a separate article.

Notice that we cannot extend the correspondence~\eqref{correspondence} to a
correspondence between $H_{n, c}$ and $H_{n,1}$. Indeed, if
$u$ is a derivation on $\hlie$, then
$\frac{1}{c} u + \frac{1-c}{2c} \id$ is a derivation on $\hlie$ if and only if $\id$ is a
derivation on $\hlie$. But this is possible only if $\hlie$ is abelian.
\end{remark}
The next example shows that $A_{n, c}$ can be  a proper subset of $H_{n, c}$.
We take $n=1$, $c=-1$, and $\hlie$ to be the Kähler algebra with the basis
$A$, $B$ such that
\begin{equation*}
\begin{aligned}
g(A,A) = g(B,B) =1,\ JA = B,\ [A,B] = A.
\end{aligned}
\end{equation*}
We define the operators $u$ and $v$ on $\hlie$ by
\begin{equation*}
\begin{aligned}
uA =-A,\ uB =0,\ vA = 0,\ vB =-A.
\end{aligned}
\end{equation*}
The operators $u$ and $v$
are derivations on $\hlie$
\begin{equation*}
\begin{aligned}
u[A,B] = uA = -A = [-A,B] = [uA,B] + [A, uB],\\
v[A,B] = vA = 0 = [vA,B] + [A,-A] = [vA,B] + [A,vB].
\end{aligned}
\end{equation*}
Next, we check that $u$ and $v$ satisfy the conditions~\eqref{conditions}.
We have
\begin{equation*}
\begin{aligned}
([u, v] + v) A&  = -vuA = v A =0,\\  ([u, v]+v)B & = uvB + vB = u(-A) -A = A -A =0.
\end{aligned}
\end{equation*}
Hence $[u, v]=-v$. Moreover,
\begin{equation*}
\begin{aligned}
(J + u^*J + Ju) A & = B + u^*B - JA = B + 0 - B =0,\\
(J + u^*J + Ju) B & = -A - u^*A +0 = -A + A =0,\\
(v^*J + Jv) A & = v^* B + 0 = 0, \\
(v^* J + Jv) B & = -v^* A - JA = B - B =0.
\end{aligned}
\end{equation*}
It is left to show that $v + uJ$ commutes with $J$. But, in fact, $v + uJ=0$
\begin{equation*}
\begin{aligned}
(v + uJ) A & = 0 + uB  = 0, \\
(v + uJ) B & = -A -uA =  - A + A =0.
\end{aligned}
\end{equation*}

\section{Four-dimensional unimodular integrable LCK Lie algebras.}
The description of all four-dimensional unimodular integrable LCK Lie algebras
can be done either by filtering the list of all $4$-dimensional LCK Lie
algebras obtained in~\cite{angella} or by classifying isomorphism classes in
$A_{1,1}$.

We go by the second path as it involves less computation.
Notice that for every $(\hlie,u,v) \in A_{1,1}$ the operator $v$ is nilpotent.
Indeed, $\ad_V$ is a nilpotent operator on $\liealg = \aaan\ltimes_{u,v}\hlie $, since
$V \in U^\perp=\liealg^{(1)} $ and $\liealg^{(1)}$ is a nilpotent ideal, as $\liealg$ is solvable due to Theorem~\ref{unimodularsolvable}.
Hence also the restriction of $\ad_V$ on $\hlie$ is nilpotent.

We distinguish two cases. The first one when $v=0$ and the second when
$\rank(v)=1$.

For $v=0$ the conditions~\eqref{conditions} reduce to
\begin{equation*}
\begin{aligned}
{} [Ju,J]=0,\quad J + u^*J + Ju =0.
\end{aligned}
\end{equation*}
The first condition is equivalent to $[u,J]=0$.
Let $X$, $JX$ be an orthonormal basis of $\hlie$. Then by an easy computation from $[u,J]=0$ and $J + u^*J +
Ju =0$  we obtain that
the matrix of $u$ in this basis has the form
\begin{equation*}
\begin{aligned}
\left(
\begin{array}{rr}
-\frac{1}{2} & -b \\[1ex]
b & -\frac{1}{2}
\end{array}
 \right).
\end{aligned}
\end{equation*}
Then, for every $Y\in \hlie$ of length one we get
$g(uY,JY) = b$ and $g(uY, Y) = -(1/2)$.
Hence to each triple $(\hlie, u, v)$ we can associate a real number $b\in \R$.
It can be shown that two triples are isomorphic if and only if they have the
same parameter $b$. So we get a one-parameter family of integrable LCK Lie
algebras.
Denote by $\liealg_b$ the LCK Lie algebra constructed from a triple
$(\hlie, u, v)$ with the parameter $b$.
The family of Lie algebras $\liealg_b$ can be identified with the family
$\mathfrak{r}'_{4,\gamma,\delta}$ in~\cite{angella}, where the extra parameter
$\delta>0$ is due to the lack of normalization of $U$.
 The  family $\liealg_b$  was thoroughly  studied in Section~3.3.2
of~\cite{andrada}.
In particular, it was identified for which parameters $b$ the $1$-connected Lie
group associated to $\liealg_b$ admits a cocompact discrete subgroup.

In the case $\rank(v)=1$ the subspace $\ker v$ is one-dimensional. Choose
$X \in \ker v$ of length one. Then the relations~\eqref{conditions} imply
that the matrices of $u$ and $v$ in the basis $X$, $JX$ are
\begin{equation}
\label{uvmatr}
\begin{aligned}
{ } [u] =
\left(
\begin{array}{rr}
0 & 0 \\
0 & -1
\end{array}
 \right)\quad \quad \quad
[v] =
\left(
\begin{array}{rr}
0 & 1 \\
0 & 0
\end{array}
 \right).
\end{aligned}
\end{equation}
This shows that there is exactly one isomorphism class of such triples
$(\hlie, u,v)$ with $\rank(v)=1$.
The corresponding Lie algebra is denoted by $\mathfrak{d}_4$ in the list of LCK
Lie algebras of~\cite{angella}. The corresponding $1$-connected Lie group
contains a cocompact discrete subgroup. This fact was first claimed
in~\cite{cordero}, with erroneous justification, and then shown to be true
in~\cite{sawai}.

\section{Final remarks}
It is possible to produce many examples of unimodular integrable LCK Lie
algebras in any dimension $2n+2$. For example, we can start with the triple
$(\hlie, u, v)\in A_{1,1}$, where $u$ and $v$ are given by~\eqref{uvmatr}.
Then $(\hlie^{\oplus n}, u^{\oplus n} v^{\oplus n}) \in A_{n,1}$.
Using the correspondence~\eqref{correspondence} we get a triple
$(\hlie^{\oplus n}, \bar{u}^{\oplus n}, \bar{v}^{\oplus n})\in A_{n,n}$ where the matrices
of $\bar{u}$ and $\bar{v}$ are

\begin{equation*}
\begin{aligned}
{} [\bar{u}] =
\left(
\begin{array}{cc}
\frac{1}{2} (n-1) & 0 \\[1ex]
0 & - \frac{1}{2}(n+1)
\end{array}
 \right)\quad \quad \quad
[\bar{v}] =
\left(
\begin{array}{rr}
 0 & n  \\[1ex]
0 & 0
\end{array}
 \right).
\end{aligned}
\end{equation*}
Given a solvable Lie group $G$ with right-invariant LCK structure, it is
always possible to construct a quotient manifold $M$ of $G$ such that the
resulting LCK structure on $M$ is not globally conformal Kähler.
For this we take the discrete subgroup $\Gamma = \exp(\Z U)$ of $G$ and define
$M = G/\Gamma$. We consider the induced LCK structure on $M$. Define the closed
loop $\gamma \colon [0,1] \to M$ by $\gamma(t) = \pi \circ\exp(Ut)$, where $\pi
\colon G \to M$ is the canonical projection.
Then the integral of the Lee 1-form along $\gamma$ is given by
\begin{equation*}
\begin{aligned}
\int_{\gamma} \theta = \int_{0}^1 \theta (\dot\gamma(t)) dt = \int_{0}^1
\theta(U_{\gamma(t)}) dt = 1.
\end{aligned}
\end{equation*}
If $\theta$ on $M$ were exact, say $\theta = d\alpha $ then we would have
by Stokes theorem
\begin{equation*}
\begin{aligned}
\int_{\gamma} \theta = \int_{\gamma} d\alpha = \int_{\partial \gamma} \alpha =0.
\end{aligned}
\end{equation*}
Hence $\theta$ is not exact on $M$ and the LCK structure on $M$ is not globally
conformal Kähler.

Combining the above two constructions, we get examples of integrable LCK manifolds, which are not
globally conformal Kähler in every even dimension greater  than $2$.

\bibliography{lck}
\bibliographystyle{amsalpha}
\end{document}